\definecolor{bgcolor}{rgb}{0.76,0.88,0.50}
\definecolor{bgcolor0}{rgb}{0.93,0.99,1}
\definecolor{bgcolor1}{rgb}{0.8,1,1}
\definecolor{bgcolor2}{rgb}{0.8,1,0.8}
\definecolor{bgcolor3}{rgb}{0.50,0.90,0.50}
\definecolor{mydarkgreen}{rgb}{39,130,67}
\definecolor{mydarkred}{rgb}{192,25,25}
\newcommand{\tr}[1]{\operatorname{Tr}\left( #1 \right)}
\newcommand{\norm}[1]{\left\| #1 \right\|}
\newcommand{\norms}[1]{\left\| #1 \right\|^2}
\newcommand{\abs}[1]{\left| #1 \right|}
\newcommand{\R}{\mathbb{R}} 
\newcommand{\E}[1]{\mathbb{E}\left[#1\right]}
\newcommand{\Prob}[1]{\mathbb{P}\left(#1\right)} 
\newcommand{\cA}{\mathcal{A}}
\newcommand{\cD}{\mathcal{D}}
\newcommand{\cG}{\mathcal{G}}
\newcommand{\cN}{\mathcal{N}}
\newcommand{\cO}{\mathcal{O}}
\newcommand{\cS}{\mathcal{S}}
\newcommand{\cV}{\mathcal{V}}
\newcommand{\cX}{\mathcal{X}}
\newcommand{\mC}{\mathbf{C}}
\newcommand{\mD}{\mathbf{D}}
\newcommand{\mM}{\mathbf{M}}
\newcommand{\mI}{\mathbf{I}}
\newcommand{\one}{\mathbf{1}}
\newcommand{\mP}{\mathbf{P}}
\newcommand{\mGamma}{\mathbf{\Gamma}}
\definecolor{bgcolor}{rgb}{0.76,0.88,0.50}
\definecolor{bgcolor0}{rgb}{0.93,0.99,1}
\definecolor{bgcolor1}{rgb}{0.8,1,1}
\definecolor{bgcolor2}{rgb}{0.8,1,0.8}
\definecolor{bgcolor3}{rgb}{0.50,0.90,0.50}
\definecolor{mydarkgreen}{RGB}{39,130,67}
\definecolor{mydarkorange}{RGB}{236,147,14}
\definecolor{mydarkred}{RGB}{192,47,25}
\definecolor{blue}{RGB}{0,0,255}
\definecolor{ruby}{RGB}{155,17,30}
\definecolor{chili}{RGB}{191,0,0}
\definecolor{sangria}{RGB}{146,0,10}
\definecolor{burgundy}{RGB}{128,0,32} 
\definecolor{darkred}{RGB}{132,0,0} 
\definecolor{cherry}{RGB}{192,0,0} 
\definecolor{blue}{RGB}{0,0,255}
\newcommand{\red}{\color{cherry}}
\newcommand{\algname}[1]{{\sf\footnotesize\red#1}\xspace}
\theoremstyle{plain}
\newtheorem{theorem}{Theorem}[section]
\newtheorem{proposition}[theorem]{Proposition}
\newtheorem{lemma}[theorem]{Lemma}
\newtheorem{corollary}[theorem]{Corollary}
\theoremstyle{definition}
\newtheorem{definition}[theorem]{Definition}
\newtheorem{assumption}[theorem]{Assumption}
\theoremstyle{remark}
\newcommand{\eqdef}{:=}
\def\<{\left\langle}
\def\>{\right\rangle}
\def\[{\left[}
\def\]{\right]}
\def\({\left(}
\def\){\right)}
\def\Pr{{\rm Prob}}
\newcommand{\Diag}[1]{\mathrm{Diag}\left(#1\right)}
\newcommand{\range}[1]{\mathrm{Range}\left(#1\right)}
\newcommand{\vast}{\bBigg@{4}}
\DeclareMathOperator*{\argmin}{arg\,min}
\newcommand{\scalar}[2]{\left\langle #1, #2 \right\rangle}
\newcommand{\wrt}{{\em w.r.t.~}}
\theoremstyle{theorem}
\newenvironment{restate-theorem}[1]
{\innercustomthm}
{\endinnercustomthm}
\newenvironment{restate-lemma}[1]
{\innercustomlemma}
{\endinnercustomlemma}
\begin{document}

%

%

\twocolumn[

\aistatstitle{Differentially Private Random Block Coordinate Descent}

\aistatsauthor{ Artavazd Maranjyan \And Abdurakhmon Sadiev \And  Peter Richt\'{a}rik }

\aistatsaddress{ KAUST \And  KAUST \And KAUST } ]

\begin{abstract}

    Coordinate Descent (\algname{CD}) methods have gained significant attention in machine learning due to their effectiveness in solving high-dimensional problems and their ability to decompose complex optimization tasks. 
    However, classical \algname{CD} methods were neither designed nor analyzed with data privacy in mind, a critical concern when handling sensitive information. 
    This has led to the development of differentially private \algname{CD} methods, such as \algname{DP-CD} (Differentially Private Coordinate Descent) proposed by \cite{mangold_dp-cd}, yet a disparity remains between non-private \algname{CD} and \algname{DP-CD} methods. 
    In our work, we propose a differentially private random block coordinate descent method that selects multiple coordinates with varying probabilities in each iteration using sketch matrices. 
    Our algorithm generalizes both \algname{DP-CD} and the classical \algname{DP-SGD} (Differentially Private Stochastic Gradient Descent), while preserving the same utility guarantees. 
    Furthermore, we demonstrate that better utility can be achieved through importance sampling, as our method takes advantage of the heterogeneity in coordinate-wise smoothness constants, leading to improved convergence rates.
\end{abstract}

\section{Introduction}

Recently, there has been a growing interest in Coordinate Descent (\algname{CD}) methods due to their wide range of applications in machine learning, particularly for large or high-dimensional datasets. 
These methods effectively break down complex optimization problems into simpler subproblems, which can be easily parallelized or distributed \citep{wright2015coordinate, shi2016primer}. 
As a result, they are considered state-of-the-art for various optimization tasks \citep{nesterov2012efficiency, shalev2013stochastic, shalev2013accelerated, lin2014accelerated, fercoq2015accelerated, richtarik2016parallel, nesterov2017efficiency}.

Despite their effectiveness, most studies on coordinate descent methods do not consider privacy concerns. 
In machine learning, handling sensitive or confidential data poses significant challenges, as emphasized by \cite{shokri2017membership}. 
The potential risks associated with data leakage necessitate robust mechanisms to ensure that individual data points cannot be easily reconstructed or identified from the trained models. 
Differential privacy (DP) has emerged as a key strategy for mitigating these issues. 
A common approach to ensure privacy while training models is to formulate the problem as an empirical risk minimization (ERM) task under DP constraints, as outlined by \citet{chaudhuri2011differentially}.

Recently, \cite{mangold_dp-cd} introduced a differentially private version of coordinate descent, referred to as \algname{DP-CD}. 
While this method has made strides in addressing privacy concerns, a gap remains between traditional coordinate descent methods and \algname{DP-CD}. 
In particular, \algname{DP-CD} selects only one coordinate per iteration, which can limit its efficiency in high-dimensional settings. 

In this work, we aim to bridge this gap by allowing for the selection of multiple coordinates with varying probabilities in each iteration. 
This approach not only enhances flexibility in optimization but also improves convergence rates, especially in scenarios where certain coordinates may have more significant impacts on the objective function than others. 
By leveraging block coordinate descent strategies, our method can better capture the underlying structure of the problem, leading to more efficient solutions while maintaining the privacy guarantees required in sensitive applications.

\section{Problem formulation and Contributions}

Our work focuses on designing a differentially private algorithm to approximate the solution to an empirical risk minimization (ERM) problem, defined as:
$$
  w^{\star} \in \argmin_{w \in \mathbb{R}^d} \left\{ f(w) \eqdef \frac{1}{n} \sum_{i=1}^{n} \ell(w; \zeta_i) \right\},
$$
where $\ell(w; \zeta_i): \R^d \times \cX \rightarrow \R$ is the loss function for a sample $\zeta_i$, and $D = (\zeta_1, \dots, \zeta_n)$ is a dataset of $n$ samples drawn from the universe $\cX$.

In \citet{mangold_dp-cd}, the authors tackle the problem of minimizing a composite ERM objective, which includes a nonsmooth convex regularizer:
$$
  \min_{w \in \mathbb{R}^d} \left\{ f(w) \eqdef \frac{1}{n} \sum_{i=1}^{n} \ell(w; \zeta_i) + \psi(w) \right\}.
$$
However, they assume the regularizer $\psi$ is separable, meaning $\psi(w) = \sum_{i=1}^{d} \psi_i(w_i)$. 
We argue that this assumption is unnecessary, as it is not required in the optimization literature (see, e.g., \citet{hanzely2020variance, safaryan2021smoothness}). 

Addressing the case where $\psi$ is non-separable involves more advanced techniques, which we leave as future work. 
Further discussion on this can be found in \Cref{section_conclusion_future_work}.

\subsection{Summary of Contributions}

In this work, we present several key contributions that advance the understanding and application of differentially private coordinate descent methods:

\begin{itemize}
  \item In \Cref{section:algorithm}, we introduce the \algname{DP-SkGD} algorithm, as detailed in \Cref{algorithm}. 
  This method is a differentially private adaptation of the \algname{SkGD} approach proposed by \citet{safaryan2021smoothness}. 
  Our algorithm leverages sketches to efficiently select multiple coordinates in each iteration, which enhances its performance and flexibility. 
  As a result, \algname{DP-SkGD} generalizes the \algname{DP-CD} algorithm by \citet{mangold_dp-cd} by allowing for this multi-coordinate selection. 
  Additionally, it extends the original \algname{SkGD} method not only by incorporating differential privacy and noise but also by using diagonal matrix step sizes. 
  Furthermore, our analysis encompasses the convex regime, providing a broader theoretical framework for understanding the algorithm's behavior.

  \item In \Cref{section:block_sampling}, we investigate various sampling strategies for generating sketches, deriving utility bounds for each approach. 
  These strategies enable our algorithm to select multiple coordinates effectively while maintaining differential privacy. 
  The results are summarized in \Cref{table}, allowing for easy comparison of the effectiveness of different sampling methods.
  
  \item In \Cref{section:comparison}, we conduct a thorough comparison of our algorithm with \algname{DP-SGD}, \algname{DP-CD}, and \algname{DP-SVRG}. 
  We highlight scenarios where each method outperforms the others, particularly emphasizing that our method can achieve a speedup of up to $\sqrt{d}$ in the case of full sampling compared to \algname{DP-SGD}. 
  Furthermore, when employing importance sampling, our algorithm demonstrates a potential speedup of up to $\sqrt{d}$ over \algname{DP-CD}.
  
  \item In \Cref{section:experiments}, we evaluate the practical performance of our algorithm through extensive experiments, validating its effectiveness and efficiency in real-world applications.
\end{itemize}

\section{Preliminaries and Assumptions} \label{section_preliminaries}

In this section, we introduce key technical concepts that will be utilized throughout the paper.
Throughout the paper, we will frequently use the notation $[d]\eqdef \{1,\ldots, d\}$.

\subsection{Norms}
We start by defining two conjugate norms, which are crucial for our analysis as they help in tracking coordinate-wise quantities. 
Let $\langle u,v \rangle = \sum_{j=1}^d u_j v_j$ represent the Euclidean inner product, and define $\mM = \Diag{M_1, \dots, M_d}$, where $M_1, \ldots, M_d > 0$. 
We then introduce the following norm:
\begin{align*}
  \norm{w}_{\mM} &= \sqrt{\langle \mM w, w \rangle}.
\end{align*}
When $\mM$ is the identity matrix $\mI$, the $\mI$-norm $\norm{\cdot}_{\mI}$ corresponds to the standard $\ell_2$-norm $\norm{\cdot}_2$.

\subsection{Random Sets}
Next, to address random block coordinate descent, we need to introduce the notion of random sets in $[d]$.
Let $\cS$ be a probability distribution over the $2^d$ subsets of the coordinates/features of the model $x \in \R^d$ that we wish to train. 
Given a random set $S \sim \cS$, define
$$
p_j \eqdef \Pr(j \in S), \quad j \in [d].
$$
We also denote $\mP = \Diag{p_1, \ldots, p_d}$.
For this work, we restrict our attention to proper and nonvacuous random sets.
\begin{assumption}
  \label{ass:nonvacuous_proper}
  Let $S \sim \cS$ be {\em nonvacuous}, i.e., $P(S = \emptyset) = 0$, and {\em proper}, meaning that $p_j > 0$ for all $j \in [d]$.
\end{assumption}

\subsection{Sketch and Sparsification}

We study unbiased diagonal sketches, defined as follows:

\begin{definition}[Unbiased diagonal sketch]\label{def_sketch}
For a given random set $S\sim \cS$ we define a random diagonal matrix (sketch) $\mC = \mC(S) \in \R^{d\times d}$  via 
\begin{equation}\label{sketch-matrix-C}
  \mC = \Diag{\mathrm{c}_1, \dots, \mathrm{c}_d}, \quad \mathrm{c}_j = \begin{cases}
     \frac{1}{p_j}, & \;\text{if}\;j\in S,\\
     0,  & \text{otherwise}. 
    \end{cases}
\end{equation}
Equivalently, we can write
$$
\mC = \mI_{S} \mP^{-1},
$$
where $\mI_S = \Diag{\delta_1, \delta_2, \dots, \delta_d}$ is a diagonal matrix with
$$
\delta_i = 
\begin{cases}
1, & \text{if} \quad i \in S, \\
0, & \text{if} \quad i \notin S.
\end{cases}
$$
\end{definition} 
Note that given a vector $x = (x_1,\dots, x_d)\in \R^d$, we have
$$
(\mC x)_j = \begin{cases}
   \frac{x_j}{p_j}, & \text{if} \qquad j\in S, \\
  0, & \text{if} \qquad j\notin S.
\end{cases}
$$
Thus, we can control the sparsity level of the product $\mC x$ by engineering the properties of the random set $S$. 
Also note that $\E{\mC x} = x$ for all $x$.

\subsection{Assumptions}
We review the classical regularity assumptions, along with those specific to the coordinate-wise setting.

\begin{assumption}[Differentiability]
  \label{ass:differentiability}
  Function $\ell(\cdot;\zeta) : \R^d \times \cX \rightarrow \R$ is differentiable for all $\zeta \in \cX$.
\end{assumption}

\begin{assumption}[Convexity]
  \label{ass:convexity}
  Function $\ell(\cdot;\zeta) : \R^d \times \cX \rightarrow \R$ is convex for all $\zeta \in \cX$. 
  That is, for all $v, w \in \R^d$, 
  $$
  \ell(w;\zeta) \ge \ell(v;\zeta) + \langle \nabla \ell(v;\zeta), w - v \rangle.
  $$
\end{assumption}

Since we also study our algorithm in the strongly convex regime we will also need the following assumption.
\begin{assumption}[Strong convexity]
  \label{ass:strong_convexity}
  Function $f : \R^d \rightarrow \R$ is $\mu_{\mM}$-strongly-convex \wrt the norm $\smash{\norm{\cdot}_{\mM}}$.
  That is, for all $v, w \in \R^d$, 
  $$
  f(w) \ge f(v) + \scalar{\nabla f(v)}{w - v} + \frac{\mu_{\mM}}{2}\norms{w - v}_{\mM}.
  $$
\end{assumption}
The case $M_1=\cdots=M_d=1$ recovers standard $\mu_{\mI}$-strong convexity \wrt the $\ell_2$-norm.
Note that we do not assume strong convexity for $\ell$.

\begin{assumption}[Component smoothness]
  \label{ass:comp_smoothness}
  Function $f : \R^d \rightarrow \R$ is $\mM$-component-smooth for $M_1,\dots,M_d > 0$.
  That is, for all $v, w \in \R^d$, 
  $$
  f(w) \le f(v) + \scalar{\nabla f(v)}{w - v} + \frac{1}{2}\norms{w - v}_{\mM}.
  $$
\end{assumption}  
When $M_1=\dots=M_d=\beta$, $f$ is said to be $\beta$-smooth.

\begin{assumption}[Component Lipschitzness]
  \label{ass:comp_lipschitz}
  Let $\cS$ be a probability distribution over the $2^d$ subsets of $[d]$. 
  Function $\ell(\cdot;\zeta) : \R^d \times \cX \rightarrow \R$ is $L_\cS$-component-Lipschitz with $L_U > 0$ for all $U \in \range{\cS}$, for all $\zeta \in \cX$ \footnotemark. 
  \footnotetext{A better definition could involve using set covers (coverings), but we avoid this approach to prevent introducing additional notation.}
  This means that for all $v, w \in \R^d$, we have:
  $$
  \abs{\ell\(w + \mI_{U}v; \zeta\) - \ell\(w; \zeta\)} \leq L_U \norm{\mI_{U} v}.
  $$
\end{assumption} 
This is a more refined assumption compared to the classical Lipschitzness assumption, which it also generalizes. 
Moreover, we have the following special cases depending on the distribution $\cS$. Given $S \sim \cS$:
\begin{itemize}
  \item \textbf{Full sampling.} 
  If $S = [d]$ with probability one, then we recover the classical Lipschitzness assumption, which can be equivalently written as
  $$
  \abs{\ell(v; \zeta) - \ell(w; \zeta)} \leq L \norm{v - w}_2,
  $$
  for all $v, w \in \mathbb{R}^d$.
  In this case, we say that $\ell$ is $L$-Lipschitz.
  \item \textbf{Single coordinate sampling.} 
  If $S$ is singleton, i.e., $S = \{j\}$ with some probability $p_j > 0$, we obtain coordinate-wise Lipschitzness with $L_{\cS} = (L_1, \ldots, L_d)$, where $L_j > 0$ for all $j \in [d]$. 
  Specifically, we have:
  $$
  \abs{\ell(w + t e_j; \zeta) - \ell(w; \zeta)} \leq L_j \abs{t},
  $$
  for all $w \in \mathbb{R}^d$ and $t \in \mathbb{R}$, where $e_j$ is the $j$-th vector in the canonical basis of $\mathbb{R}^d$. 
  This assumption is stated in \citet{mangold_dp-cd}.
  \item \textbf{Block sampling.}  
  Consider a partition of $[d]$ into $b$ nonempty blocks, denoted as $A_1, \dots, A_b$.  
  Let $S = A_j$ with probability $q_j > 0$, where $\sum_j q_j = 1$.  
  Under this sampling scheme, we obtain block-wise Lipschitzness, characterized by $L_{\cS} = (L_1, \ldots, L_b)$, with $L_j > 0$ for all $j \in [b]$. 
  Specifically, we have:
  $$
  \abs{\ell\(w + \mI_{C_i}v; \zeta\) - \ell\(w; \zeta\)} \leq L_i \norm{\mI_{C_i} v}.
  $$  
\end{itemize}

Note that block sampling generalizes both single-coordinate sampling and full sampling. 
However, it is not the most general case. 
For instance, consider the case where $S$ is a random subset of $[d]$ of size $\tau$, chosen uniformly at random. 
This is known as \textbf{nice sampling}, and under this sampling, our assumption still holds.

\subsection{Differential Privacy (DP)}
Next, we recall the concept of differential privacy.
Let $\cD$ be a set of datasets and $\cV$ a set of possible outcomes.
Two datasets $D, D' \in \cD$ are said \textit{neighboring} (denoted by $D \sim D'$) if they differ on at most one element.

\begin{definition}[Differential Privacy, \citep{dwork2008differential}]
  A randomized algorithm
  $\cA : \mathcal D
    \rightarrow \cV$ is $(\epsilon, \delta)$-differentially private if,
  for all neighboring datasets $D, D' \in \mathcal D$ and all
  $U \subseteq \cV$ in the range of $\cA$:
  \begin{align*}
    \Prob{\cA(D) \in U} \le \exp(\epsilon) \Prob{\cA(D') \in U} + \delta .
  \end{align*}
\end{definition}
The value of a function $h: \mathcal D \rightarrow \mathbb R^p$ can be privately released using the Gaussian mechanism, which adds centered Gaussian noise to $h(D)$ before releasing it \citep{dwork2014algorithmic}.
The scale of the noise is calibrated to the sensitivity $\Delta(h) = \sup_{D \sim D'} \norm{h(D) - h(D')}_2$ of $h$.

In our context, we are interested in the sensitivity arising from specific coordinates. 
Let $U\subseteq[d]$ represent a set of coordinates. 
We define the sensitivity as follows:
$$
\Delta_U(h) \eqdef \sup_{D \sim D'} \norm{\mI_U \(h(D) - h(D')\)}_2.
$$
We are interested in analyzing the sensitivity of $\ell$.
Given that $\ell$ satisfies the component-Lipschitzness property (\Cref{ass:comp_lipschitz}), we can bound its sensitivity as follows.
\begin{lemma}[Proof in \Cref{section:proof_sensitivity}]
  \label{lem:sensitivity}
  Let $\ell: \R^d \times \cX \rightarrow \R$ be differentiable in its first argument (\Cref{ass:differentiability}), convex (\Cref{ass:convexity}), and $L_{\cS}$-component-Lipschitz with $L_U > 0$ for all $U \in \range{\cS}$ (\Cref{ass:comp_lipschitz}).
  Then 
  $$
  \Delta_U\(\nabla \ell \) \leq 2 L_U
  $$
  for all $U \in \range{\cS}$.
\end{lemma}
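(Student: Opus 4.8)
The plan is to reduce the statement to a pointwise bound on the restricted gradient norm, namely $\norm{\mI_U \nabla \ell(w;\zeta)}_2 \le L_U$ for every $w\in\R^d$ and every $\zeta\in\cX$, and then to close the argument with a single application of the triangle inequality. This pointwise bound is really the infinitesimal content of \Cref{ass:comp_lipschitz}: Lipschitzness along the coordinates indexed by $U$ should force the $U$-block of the gradient to be small. Establishing it is where the assumptions do their work; the factor of $2$ is then free.

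To make this precise I would fix $w$, $\zeta$, and $U \in \range{\cS}$, and for an arbitrary direction $v \in \R^d$ combine two inequalities. First, convexity of $\ell(\cdot;\zeta)$ (\Cref{ass:convexity}) gives the linear lower bound
$$
\scalar{\nabla \ell(w;\zeta)}{\mI_U v} \le \ell(w + \mI_U v; \zeta) - \ell(w;\zeta),
$$
where differentiability (\Cref{ass:differentiability}) guarantees the gradient on the left is well defined. Second, component-Lipschitzness (\Cref{ass:comp_lipschitz}) bounds the right-hand side from above by $L_U \norm{\mI_U v}$. Chaining the two yields $\scalar{\nabla\ell(w;\zeta)}{\mI_U v} \le L_U\norm{\mI_U v}$ for all $v$.

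The key maneuver is then to choose $v = \mI_U \nabla\ell(w;\zeta)$ and exploit that $\mI_U$ is symmetric and idempotent ($\mI_U^2 = \mI_U$). With this choice the left-hand side becomes $\scalar{\nabla\ell(w;\zeta)}{\mI_U\nabla\ell(w;\zeta)} = \norms{\mI_U\nabla\ell(w;\zeta)}$, while $\norm{\mI_U v} = \norm{\mI_U\nabla\ell(w;\zeta)}$, so that
$$
\norms{\mI_U\nabla\ell(w;\zeta)} \le L_U\,\norm{\mI_U\nabla\ell(w;\zeta)}.
$$
Dividing by $\norm{\mI_U\nabla\ell(w;\zeta)}$ (the claim is trivial when this quantity vanishes) gives the desired pointwise bound $\norm{\mI_U\nabla\ell(w;\zeta)}_2 \le L_U$, uniformly in $w$ and $\zeta$.

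Finally, for neighboring datasets $D \sim D'$ the samples differ in at most one entry, so the comparison reduces to two samples $\zeta,\zeta'\in\cX$; applying the pointwise bound to each gradient and the triangle inequality gives $\norm{\mI_U(\nabla\ell(w;\zeta) - \nabla\ell(w;\zeta'))}_2 \le \norm{\mI_U\nabla\ell(w;\zeta)}_2 + \norm{\mI_U\nabla\ell(w;\zeta')}_2 \le 2L_U$, and taking the supremum over $D\sim D'$ yields $\Delta_U(\nabla\ell)\le 2L_U$. I expect the only genuinely delicate point to be the choice of test direction $v=\mI_U\nabla\ell(w;\zeta)$ together with the symmetry/idempotency bookkeeping for $\mI_U$; everything else is a direct substitution of the stated assumptions.
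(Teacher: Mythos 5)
Your proposal is correct and takes essentially the same route as the paper: both prove the pointwise bound $\norm{\mI_U \nabla \ell(w;\zeta)} \leq L_U$ by pairing the convexity lower bound with the component-Lipschitz upper bound at the test direction $\mI_U \nabla \ell(w;\zeta)$ (using that $\mI_U$ is a symmetric idempotent), and then conclude via the triangle inequality and the definition of $\Delta_U$. Your explicit treatment of the degenerate case $\norm{\mI_U \nabla \ell(w;\zeta)} = 0$ is a minor point of extra care that the paper's division step leaves implicit.
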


In this paper, we focus on the classical central model of differential privacy (DP), where a trusted curator has access to the raw dataset and releases a trained model based on that data.

\section{Differentially Private Sketched Gradient Descent}
\label{section:algorithm}

We propose the \algname{DP-SkGD} algorithm, as shown in \Cref{algorithm}. 
This method is a differentially private (DP) adaptation of the \algname{SkGD} algorithm introduced by \citet{safaryan2021smoothness}. 
The key innovation in \algname{DP-SkGD} is the integration of differential privacy into the sketched gradient framework, which enables efficient updates while preserving privacy.

The algorithm uses sketch matrices $\mathbf{C}$, as defined in \eqref{sketch-matrix-C}, to update on a randomly selected subset of coordinates. 
At each iteration, based on a randomly drawn subset $S$, a noisy gradient update is performed, adjusting only the coordinates in $S$. 
This process makes the algorithm particularly well-suited for large-scale optimization tasks where updating all coordinates in each step is computationally expensive.

The variance of the added noise is proportional to the coordinates selected in $S$, allowing for different noise scales $\sigma_S$. 
This means that coordinates with larger Lipschitz constants or those that are more sensitive can receive more noise to preserve differential privacy, while less sensitive coordinates can be updated with less noise. 
This structure allows for a more fine-tuned noise injection compared to global, uniform noise addition seen in some other DP algorithms.

Furthermore, by setting different step sizes $\gamma_i$ for each coordinate (encoded in the diagonal matrix $\mGamma$), the updates are directionally unbiased, meaning that each coordinate is updated in a way that reflects its specific geometry and properties, thus preserving the integrity of the gradient direction. 
This is a common property of \algname{SGD}-type methods, ensuring that the overall method converges to the correct solution even when updates are performed on different subsets of coordinates at different iterations.

\begin{algorithm}[H]
    \begin{algorithmic}[1]
    \STATE \textbf{Input:} Initial point $w^0\in\R^d$, step sizes $\mGamma = \Diag{\gamma_1,\ldots, \gamma_d}$, number of iterations $T$, number of inner loops $K$, probability distribution $\cS$ over the subsets of $[d]$, noise scales $\sigma_U$ for $U\in \range{\cS}$.
    \FOR{$t = 0,\ldots, T-1$}
    \STATE Set $\theta^0 = w^{t}$
    \FOR{$k = 0,\ldots, K-1$}
    \STATE Sample a subset $S \sim \cS$ and let $\mC = \mC(S)$\\ (see definition \ref{def_sketch})
    \STATE Draw $\eta \sim \cN\(0, \sigma_{S} \mI\)$
    \STATE $\theta^{k+1} = \theta^k - \mGamma\mC\(\nabla f(\theta^k) + \eta \)$
    \ENDFOR
    \STATE $w^{t+1} = \frac1K \sum_{k=1}^K \theta^k$
    \ENDFOR 
    \end{algorithmic}
    \caption{\algname{DP-SkGD}} \label{algorithm}
  \end{algorithm}

In the special case where the distribution $\mathcal{S}$ is uniform over single elements in $[d]$, the proposed algorithm simplifies to \algname{DP-CD}, the \algname{Differentially Private Coordinate Descent} method introduced by \citet{mangold_dp-cd}. 
Additionally, if $S=[d]$ with probability one it reduces to \algname{DP-SGD} \citep{bassily2014Private}.

\subsection{Privacy Guarantees}
For \Cref{algorithm} to satisfy $(\epsilon,\delta)$-DP, the noise scales $\sigma_S$ should be calibrated as given by the theorem bellow.

\begin{theorem}[Proof in \Cref{proof_privacy}] 
  \label{thm_privacy}
  Let $\cS$ be nonvacuous and proper probability distribution over the subsets of $[d]$ (\Cref{ass:nonvacuous_proper}).
  Let $\ell: \R^d \times \cX \rightarrow \R$ be differentiable in its first argument (\Cref{ass:differentiability}), convex (\Cref{ass:convexity}), and $L_{\cS}$-component-Lipschitz with $L_U > 0$ for all $U \in \range{\cS}$ (\Cref{ass:comp_lipschitz}).
  Let $0 < \epsilon \leq 1$, $\delta < 1/3$. 
  If 
  $$
  \sigma_U^2 = \frac{12 L_U^2 K T \log(1/\delta)}{n^2\epsilon^2},
  $$
  for all $U \in \range{\cS}$, then \Cref{algorithm} satisfies $(\epsilon, \delta)$-DP.
\end{theorem}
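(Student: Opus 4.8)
The plan is to reduce the whole $KT$-step trajectory to a composition of Gaussian mechanisms, one per inner iteration, to account for the cumulative privacy loss using zero-concentrated differential privacy (zCDP) together with adaptive composition, and finally to convert back to an $(\epsilon,\delta)$ guarantee. The first observation is that every data-independent operation can be stripped away by post-processing: in the update $\theta^{k+1} = \theta^k - \mGamma\mC(\nabla f(\theta^k)+\eta)$ the sketch factorizes as $\mC = \mI_S\mP^{-1}$ with $\mP^{-1}$ and $\mGamma$ fixed diagonal matrices, while the outer averaging $w^{t+1}=\frac1K\sum_k\theta^k$ and the resets $\theta^0=w^t$ are deterministic. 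Hence it suffices to argue that the sequence of released quantities $\mI_S(\nabla f(\theta^k)+\eta)$ is private; everything the analyst ultimately sees is a deterministic function of these, so the DP guarantee is inherited by post-processing.

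Next I would bound the per-step sensitivity. Fixing the realized set $S$ and the current iterate $\theta^k$ (which, as in standard iterative DP analyses, is determined by the previously released noisy gradients and is therefore identical for two neighbouring datasets once we condition on the earlier outputs), the relevant query is $\theta^k \mapsto \mI_S\nabla f(\theta^k)$. Since $f=\frac1n\sum_i\ell(\cdot;\zeta_i)$ and neighbouring datasets differ in a single sample, \Cref{lem:sensitivity} gives $\Delta_S(\nabla f)\le \frac{1}{n}\Delta_S(\nabla\ell)\le \frac{2L_S}{n}$. Each inner step then adds Gaussian noise of variance $\sigma_S^2$ per coordinate, so it is a Gaussian mechanism and satisfies $\rho_S$-zCDP with $\rho_S = \Delta_S^2/(2\sigma_S^2)$. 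The key point, and what makes the calibration clean, is that substituting $\sigma_S^2 = 12L_S^2KT\log(1/\delta)/(n^2\epsilon^2)$ cancels the $L_S$-dependence and yields $\rho_S = \epsilon^2/(6KT\log(1/\delta))$, the \emph{same} value for every $S\in\range{\cS}$.

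With a uniform per-step parameter the composition is immediate. Conditioning on the (data-independent) sequence of sampled sets, the $KT$ inner steps form an adaptively composed sequence of Gaussian mechanisms, so by the composition rule for zCDP the trajectory is $KT\rho_S$-zCDP with $KT\rho_S = \epsilon^2/(6\log(1/\delta))$; because this bound does not depend on the realized sets, it survives taking expectation over the sampling (the set sequence is independent of the data, hence may be released for free and contributes nothing to the Rényi divergence). Finally I would invoke the standard conversion that $\rho$-zCDP implies $(\rho + 2\sqrt{\rho\log(1/\delta)},\delta)$-DP, which here gives privacy level $\rho + 2\sqrt{\rho\log(1/\delta)} = \frac{\epsilon^2}{6\log(1/\delta)} + \frac{2\epsilon}{\sqrt6}$.

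The only remaining work is to check that this quantity is at most $\epsilon$, which is where the hypotheses $\epsilon\le1$ and $\delta<1/3$ enter: since $2/\sqrt6<1$ and $\log(1/\delta)>\log 3>1$, the leftover term $\epsilon^2/(6\log(1/\delta))$ is comfortably smaller than $(1-2/\sqrt6)\epsilon$, so the sum stays below $\epsilon$ and $(\epsilon,\delta)$-DP follows. I expect the main obstacle to be the bookkeeping around adaptivity and the random sampling rather than any single inequality: one must be careful that conditioning on earlier outputs freezes $\theta^k$ so that \Cref{lem:sensitivity} applies to a fixed query, and that the data-independence of the set sequence lets one handle the mixture over $S$ without losing the uniform zCDP bound. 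The numerical constant $12$ is precisely what is needed to absorb the $2/\sqrt6$ factor under the stated ranges of $\epsilon$ and $\delta$.
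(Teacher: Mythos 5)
Your proof is correct and follows essentially the same route as the paper: bound the per-step sensitivity by $2L_U/n$ via \Cref{lem:sensitivity}, view each inner iteration as a Gaussian mechanism whose R\'enyi-type privacy parameter is the same for every realized set $S$ (thanks to calibrating $\sigma_S \propto L_S$), compose over all $KT$ steps, and convert to $(\epsilon,\delta)$-DP. The only difference is packaging: where you work in zCDP (which is just RDP uniformly over the order $\alpha$) and verify the final inequality $\frac{\epsilon^2}{6\log(1/\delta)} + \frac{2\epsilon}{\sqrt{6}} \le \epsilon$ by hand under $\epsilon \le 1$, $\delta < 1/3$, the paper invokes RDP composition (\Cref{proposition:composition}) and the ready-made conversion lemma \Cref{lem:ed_dp} (Corollary B.8 of \citet{mangold_dp-cd}), whose content is precisely the numerical fact you checked --- so your argument is, if anything, more self-contained, and your explicit treatment of the data-independent sketch sampling and of post-processing makes rigorous a step the paper leaves implicit.
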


\subsection{Utility Guarantees}

Here we state the utility results of \algname{DP-SkGD}.

\begin{theorem}[Proof in \Cref{proof_utility}] \label{thm_utility}
  Let $\cS$ be a nonvacuous and proper probability distribution over the subsets of $[d]$ (\Cref{ass:nonvacuous_proper}).
  Define $\mP = \Diag{p_1, \dots, p_d}$, where $p_i = \Pr(i \in S)$ denotes the probability that the $i$-th component is included in the random set $S \sim \cS$.
  Let $\ell: \R^d \times \cX \rightarrow \R$ be differentiable in its first argument (\Cref{ass:differentiability}), convex (\Cref{ass:convexity}), and $L_{\cS}$-component-Lipschitz with $L_U > 0$ for all $U \in \range{\cS}$ (\Cref{ass:comp_lipschitz}).
  Further, let $f$ be $\mM$-component-smooth (\Cref{ass:comp_smoothness}), and let $w^{\star}$ denote a minimizer of $f$, with $f^{\star} = f(w^{\star})$ representing the minimum value.
  Consider $w_{\text{priv}} \in \R^d$ as the output of \Cref{algorithm} with step sizes $\mGamma = \mP\mM^{-1}$.
  We assume a privacy budget with $\epsilon \leq 1$ and $\delta < 1/3$.
  The noise scales $\sigma_U$ are set as in \Cref{thm_privacy} to ensure $(\epsilon,\delta)$-DP, with $T$ and $K$ chosen below.
  Denote 
  $$
  \Sigma_S^2 = \E{\|\mC L_S \one \|_{\mP \mM^{-1}}^2},
  $$
  where $\one$ is a vector of all ones, i.e., $\one = (1, \ldots, 1)^\top$.
  \begin{itemize}
    \item \textbf{Convex Case.} 
      Set
      $$
      T = 1 \quad\text{and}\quad K = \frac{n\varepsilon R_{\mM\mP^{-1}}}{\Sigma_S \sqrt{\log(1/\delta)}}.
      $$
      Then, the utility of \Cref{algorithm} is given by 
        \begin{equation*}
            \E{f(w^{\text{priv}}) - f^{\star}}  = \cO \(\frac{\Sigma_S R_{\mM\mP^{-1}}}{n\varepsilon}\log^{\nicefrac{1}{2}} \frac{1}{\delta}\),
        \end{equation*}
      where $R^2_{\mM\mP^{-1}} = \norms{w^0 - w^{\star}}_{\mM\mP^{-1}}$.
      
    \item \textbf{Strongly Convex Case.} 
      Assume $f$ is $\mu$-strongly convex (\Cref{ass:strong_convexity}).
      Set 
      $$
      K = 2\(1+\frac{1}{{\mu}\max_{i\in[d]}\left\{\frac{M_i}{p_i}\right\}}\),
      $$
      $$
      T = \log_2\left(\frac{(f(w^0) - f^{\star}) n^2 \epsilon^2}{\(1+\frac{1}{{\mu}}\max_{i\in[d]}\left\{\frac{M_i}{p_i}\right\}\) \Sigma_S^2 \log(1/\delta)}\right).
      $$
      Then, the utility of \Cref{algorithm} is given by 
      \begin{equation*}
          \E{f(w^T) - f^{\star}} = \widetilde \cO\left(\frac{1}{\mu}\max_{i\in[d]}\left\{\frac{M_i}{p_i}\right\} \Sigma_S^2 \frac{\log(1/\delta)}{n^2 \epsilon^2}\right).
      \end{equation*}
  \end{itemize}
\end{theorem}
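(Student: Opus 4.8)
The plan is to track the squared distance to the optimizer in the weighted norm $\norm{\cdot}_{\mM\mP^{-1}}$, which is the natural Lyapunov function here because the prescribed step-size matrix $\mGamma=\mP\mM^{-1}$ satisfies the identity $\mM\mP^{-1}\mGamma=\mI$. First I would expand one inner step $\theta^{k+1}=\theta^k-\mGamma\mC(\nabla f(\theta^k)+\eta)$ and take the conditional expectation over $S\sim\cS$ and the noise $\eta$. The heart of the computation is the sketch algebra. Using $\E{\mC x}=x$ (stated in the excerpt) and $\E{\eta}=0$, together with $\mM\mP^{-1}\mGamma=\mI$, the cross term collapses to $-2\scalar{\nabla f(\theta^k)}{\theta^k-w^{\star}}$. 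For the quadratic term one checks $\mGamma\mM\mP^{-1}\mGamma=\mGamma$, so that $\norms{\mGamma\mC v}_{\mM\mP^{-1}}=v^\top\mC\mGamma\mC v$ with $\mC\mGamma\mC=\mI_{S}\mM^{-1}\mP^{-1}$; since $\E{\mI_S}=\mP$ its expectation over $S$ is $\mM^{-1}$, giving a gradient part $\norms{\nabla f(\theta^k)}_{\mM^{-1}}$, while the noise part is $\sigma_S^2\Tr{\mC\mGamma\mC}=\sigma_S^2\sum_{j\in S}\tfrac{1}{M_jp_j}$. Substituting the calibrated $\sigma_U^2$ from \Cref{thm_privacy} and averaging over $S$ turns this noise contribution into exactly $\nu\eqdef\tfrac{12KT\log(1/\delta)}{n^2\epsilon^2}\Sigma_S^2$, by the very definition $\Sigma_S^2=\E{\norms{\mC L_S\one}_{\mP\mM^{-1}}}=\E{L_S^2\sum_{j\in S}\tfrac{1}{M_jp_j}}$.

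The crux is handling the cross term without losing a constant. Because the step is ``full'' ($\mGamma=\mP\mM^{-1}$), the naive route---bounding $\norms{\nabla f(\theta^k)}_{\mM^{-1}}\le 2(f(\theta^k)-f^{\star})$ by $\mM$-smoothness and $\scalar{\nabla f(\theta^k)}{\theta^k-w^{\star}}\ge f(\theta^k)-f^{\star}$ by convexity---makes the two function-gap terms cancel, leaving no descent. The fix is to split the factor of two across two inequalities: use convexity for one copy of the inner product and the co-coercivity bound $\scalar{\nabla f(\theta^k)}{\theta^k-w^{\star}}\ge\norms{\nabla f(\theta^k)}_{\mM^{-1}}$ (valid for $\mM$-smooth convex $f$ since $\nabla f(w^{\star})=0$) for the other. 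The co-coercive copy precisely cancels the gradient part of the quadratic term, yielding the clean one-step bound
$$\E{\norms{\theta^{k+1}-w^{\star}}_{\mM\mP^{-1}}} \le \norms{\theta^k-w^{\star}}_{\mM\mP^{-1}} - \(f(\theta^k)-f^{\star}\) + \nu.$$

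For the convex case ($T=1$) I would sum this over $k=0,\dots,K-1$, telescoping the distances and accumulating the per-step noise, divide by $K$, and apply Jensen to the averaged output $w^{\text{priv}}=\tfrac1K\sum_k\theta^k$ to obtain $\E{f(w^{\text{priv}})-f^{\star}}\le R_{\mM\mP^{-1}}^2/K+\nu$; balancing the two terms over $K$ reproduces the stated choice of $K$ and the rate $\cO\(\Sigma_S R_{\mM\mP^{-1}}\,n^{-1}\epsilon^{-1}\log^{\nicefrac12}(1/\delta)\)$. For the strongly convex case, the same per-epoch telescoping gives $\E{f(w^{t+1})-f^{\star}}\le\tfrac1K\E{\norms{w^{t}-w^{\star}}_{\mM\mP^{-1}}}+\nu$; then strong convexity in the form $\norms{x}_2\le\tfrac{2}{\mu}(f(x)-f^{\star})$ combined with the norm comparison $\norms{x}_{\mM\mP^{-1}}\le\max_{i}\{M_i/p_i\}\norms{x}_2$ converts the distance into a function gap, producing the contraction $\E{f(w^{t+1})-f^{\star}}\le\tfrac{2\max_i\{M_i/p_i\}}{K\mu}\E{f(w^{t})-f^{\star}}+\nu$. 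Choosing $K=\Theta\(\max_i\{M_i/p_i\}/\mu\)$ makes the contraction factor at most $\tfrac12$, and unrolling over $T$ epochs gives $\E{f(w^{T})-f^{\star}}\le 2^{-T}(f(w^0)-f^{\star})+2\nu$; the logarithmic choice of $T$ kills the first term and yields the advertised $\widetilde\cO\(\mu^{-1}\max_i\{M_i/p_i\}\,\Sigma_S^2\log(1/\delta)/(n^2\epsilon^2)\)$ utility. The main obstacle is the cross-term cancellation above, resolved by the convexity/co-coercivity split; the second delicate point is verifying that the sketch trace produces exactly the weights $1/(M_jp_j)$ matching $\Sigma_S^2$, rather than the $1/M_j$ weights that a function-value (descent-lemma) Lyapunov would give.
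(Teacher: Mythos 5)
Your proposal is correct and reproduces the stated rates, but it takes a leaner route than the paper. The paper's proof (\Cref{proof_utility}, \Cref{proof_utility_lemmas}) is built from three auxiliary results: a variance identity for the sketched gradient (\Cref{lemma_gradient_estimator}), a function-value descent lemma derived from $\mM$-smoothness, and a distance recursion (\Cref{lemma_l2_distance}); the last two are combined through a free parameter $\alpha$ (set to $\nicefrac{1}{2}$) to obtain \Cref{lemma_convergence}, whose right-hand side carries an extra $2\,\E{f(w^t)-f^{\star}}$ term that, in the convex case, must then be absorbed via the additional smoothness bound $f(w^t)-f^{\star}\le\frac12\norms{w^t-w^{\star}}_{\mM\mP^{-1}}$. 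You instead run a single Lyapunov computation on $\norms{\theta^k-w^{\star}}_{\mM\mP^{-1}}$ and split the cross term into one copy handled by convexity and one by co-coercivity, obtaining $\E{\norms{\theta^{k+1}-w^{\star}}_{\mM\mP^{-1}}\mid\theta^k}\le\norms{\theta^k-w^{\star}}_{\mM\mP^{-1}}-\(f(\theta^k)-f^{\star}\)+\nu$, which bypasses the descent lemma and the $\alpha$ machinery and gives a per-epoch bound with no function-gap carry-over. The essential ingredients are nevertheless shared: your sketch algebra ($\E{\mC\mGamma\mC}=\mM^{-1}$, noise trace $\sigma_S^2\sum_{j\in S}(M_j p_j)^{-1}$, which indeed matches $\Sigma_S^2$ after calibration) is the content of \Cref{lemma_gradient_estimator}, and your co-coercivity inequality $\scalar{\nabla f(\theta^k)}{\theta^k-w^{\star}}\ge\norms{\nabla f(\theta^k)}_{\mM^{-1}}$ is exactly the step the paper uses inside \Cref{lemma_l2_distance}; the telescoping, Jensen step, and parameter choices for $K$ and $T$ then coincide. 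Two bookkeeping points, neither a gap: (i) the algorithm averages $\theta^1,\ldots,\theta^K$ while your telescoping over $k=0,\ldots,K-1$ controls $f(\theta^0),\ldots,f(\theta^{K-1})$; shifting the sum by one index and using $\E{\norms{\theta^1-w^{\star}}_{\mM\mP^{-1}}}\le\norms{w^t-w^{\star}}_{\mM\mP^{-1}}+\nu$ fixes this at the cost of a constant factor on the noise term; (ii) in the strongly convex case your choice $K=\Theta\(\mu^{-1}\max_{i}\{M_i/p_i\}\)$ matches the appendix version of the theorem (the main-text statement, which places the max inside the denominator, appears to be a typo).
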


We use the asymptotic notation $\widetilde \cO$ to suppress insignificant logarithmic factors. 
Detailed, non-asymptotic utility bounds are provided in \Cref{proof_utility}.

A crucial quantity in our results is
$$
\Sigma_S^2 = \E{\norms{\mC L_S \one}_{\mP \mM^{-1}}} = \E{L_S^2 \norms{\mI_S \one }_{\mP^{-1} \mM^{-1}}},
$$
which varies based on the sampling strategy for $S$.
Since block sampling generalizes both full sampling and single-coordinate sampling as special cases, we will focus on this generalized scenario.

\subsubsection{Block Sampling}
\label{section:block_sampling}
\begin{table*}[t]
  \centering
  \caption{
    Utility guarantees for \algname{DP-SkGD-BS} with varying values of $b$ and different sampling strategies, along with \algname{DP-CD}, \algname{DP-SGD}, and \algname{DP-SVRG}.
  } \label{table}
    \begin{tabular*}{\textwidth}{c @{\extracolsep{\fill}} c c c}
      \toprule
      & Convex
      & Strongly-convex \\
      \midrule
      \algname{DP-SkGD-BS} (this paper)
      & $\cO_* \(\norm{L_{\{A_1,\ldots,A_b\}}}_{\mM^{-1}} R_{\mM\mP^{-1}}\)$
      & $\widetilde \cO_{-} \left(\norms{L_{\{A_1,\ldots,A_b\}}}_{\mM^{-1}} \frac{1}{\mu}\max\limits_{i\in[d]}\left\{\frac{M_i}{p_i}\right\} \right)$\\
      Uniform Sampling
      & $\cO_* \left(\norm{L_{\{A_1,\ldots,A_b\}}}_{\mM^{-1}} R_{\mM} \sqrt{b} \right)$
      & $\widetilde \cO_{-} \left(\norms{L_{\{A_1,\ldots,A_b\}}}_{\mM^{-1}} \frac{1}{\mu}M_{\max} b \right)$\\
      Importance Sampling
      & $\cO_* \left( \norm{L_{\{A_1,\ldots,A_b\}}}_{\mM^{-1}} R_{\mI} \sqrt{\sum\limits_{i=1}^b \max\limits_{j\in A_i} M_j}\right)$
      & $\widetilde \cO_{-} \left( \norm{L_{\{A_1,\ldots,A_b\}}}_{\mM^{-1}} \frac{1}{\mu} \sum\limits_{i=1}^b \max\limits_{j\in A_i} M_j \right)$ \\
      \midrule
      \makecell{\algname{DP-SkGD-BS} (this paper) \\ $b=d$}
      & $\cO_* \left(\norm{L_{\{1,\ldots,d\}}}_{\mM^{-1}} R_{\mM\mP^{-1}} \right)$
      & $\widetilde \cO_{-} \left(\norms{L_{\{1,\ldots,d\}}}_{\mM^{-1}} \frac{1}{\mu}\max\limits_{i\in[d]}\left\{\frac{M_i}{p_i}\right\} \right)$\\
      Uniform Sampling
      & $\cO_* \left(\norm{L_{\{1,\ldots,d\}}}_{\mM^{-1}} R_{\mM} \sqrt{d} \right)$
      & $\widetilde \cO_{-} \left(\norms{L_{\{1,\ldots,d\}}}_{\mM^{-1}} \frac{1}{\mu}M_{\max} d  \right)$\\
      Importance Sampling
      & $\cO_* \left(\norm{L_{\{1,\ldots,d\}}}_{\mM^{-1}} R_{\mI} \sqrt{\tr{\mM}} \right)$
      & $\widetilde \cO_{-} \left(\norms{L_{\{1,\ldots,d\}}}_{\mM^{-1}} \frac{1}{\mu}\tr{\mM}  \right)$\\
      \midrule
      \algname{DP-CD} \citep{mangold_dp-cd}
      & $\cO_* \left(\norm{L_{\{1,\ldots,d\}}}_{\mM^{-1}} R_{\mM} \sqrt{d} \right)$
      & $\widetilde \cO_{-}\left(\norms{L_{\{1,\ldots,d\}}}_{M^{-1}} \frac{1}{\mu_{\mM}}d \right)$\\
      \midrule
      \makecell{\algname{DP-SkGD-BS} (this paper) \\ $b=1$}
      & $\cO_* \left(L\sqrt{\tr{\mM^{-1}}} R_{\mM} \right)$
      & $\widetilde \cO_{-} \left(L^2\tr{\mM^{-1}} \frac{1}{\mu} M_{\max}\right)$\\
      \midrule
      \makecell{\algname{DP-SGD} \citep{bassily2014Private} \\ \algname{DP-SVRG} \citep{wang2017Differentially}}
      & $\cO_* \left(L \sqrt{d} R_{\mI}\right)$
      & $\widetilde \cO_{-} \left(L^2 \frac{1}{\mu_{\mI}} d\right)$ \\
      \bottomrule
    \end{tabular*}
    We use the notation $\mathcal{O}_*$ to suppress the common term $\frac{\sqrt{\log(1/\delta)}}{n\epsilon}$, which appears consistently across all rates. 
    Similarly, we denote $\mathcal{O}_{-}$ to suppress the term $\frac{\log(1/\delta)}{n^2 \epsilon^2}$, as it is also consistent across all rates.\\
\end{table*}

Consider a partition of $[d]$ into $b$ nonempty blocks, denoted as $A_1, \dots, A_b$.
Let $S = A_j$ with probability $q_j > 0$, where $\sum_j q_j = 1$.  
For each $i \in [n]$, let $B(i)$ indicate which block $i$ belongs to. 
In other words, $i \in A_j$ iff $B(i) = j$. 
Then $p_i := \text{Prob}(i \in S) = q_{B(i)}$.
We call the resulting method \algname{DP-SkGD-BS}.
Then we have 
\begin{align*}
  \Sigma_S^2 
  &= \E{L_S^2 \norms{\mI_S\one}_{\mP^{-1} \mM^{-1}}} = \sum_{i=1}^b q_i L_{A_i}^2 \norms{\mI_{A_i} \one}_{\mP^{-1} \mM^{-1}}\\
  &= \sum_{i=1}^b q_i L_{A_i}^2 \sum_{j\in A_i} \frac{1}{q_i M_j} = \norms{L_{\{A_1,\ldots,A_b\}}}_{\mM^{-1}},
\end{align*}
where we define 
$$
L_{\{A_1,\ldots,A_b\}} := \sum_{i=1}^d L_{B(i)}e_i.
$$
For the convex case the rate for block sampling becomes
$$
\cO \(\frac{\norm{L_{\{A_1,\ldots,A_b\}}}_{\mM^{-1}} R_{\mM\mP^{-1}}}{n\varepsilon}\log^{\nicefrac{1}{2}} \frac{1}{\delta}\).
$$
Note that we still have flexibility in choosing the probabilities $q_i$. 
Ideally, the best choice would minimize the term
$$
R_{\mM\mP^{-1}} = \norms{w^0 - w^{\star}}_{\mM\mP^{-1}},
$$
but since $w^{\star}$ is unknown, this approach is not feasible. 
However, an alternative is to use importance sampling. 
Specifically, we can choose
$$
q_i = \frac{\max_{j\in A_i} M_j}{\sum_{i=1}^b \max_{j\in A_i} M_j},
$$
which leads to the bound
$$
\mM\mP^{-1} \preceq \mI \sum_{i=1}^b \max_{j\in A_i} M_j.
$$
This results in the following utility bound:
$$
\cO \left(\frac{\norm{L_{\{A_1,\ldots,A_b\}}}_{\mM^{-1}} R_{\mI} \sqrt{\sum_{i=1}^b \max_{j\in A_i} M_j}}{n\varepsilon} \log^{\nicefrac{1}{2}} \frac{1}{\delta}\right).
$$
\begin{itemize}
  \item \textbf{Single Coordinate Sampling:} 
  In this case, we have
  $$
  \Sigma_S^2 = \norms{L_{\{1,\ldots,d\}}}_{\mM^{-1}},
  $$
  which leads to the bound
  $$
  \cO \left( \frac{\norm{L_{\{1,\ldots,d\}}}_{\mM^{-1}} R_{\mM\mP^{-1}}}{n\varepsilon} \log^{\nicefrac{1}{2}} \frac{1}{\delta} \right).
  $$
  If we choose $p_i = \frac{1}{d}$, we obtain the simplified result:
  $$
  \cO \left( \frac{\norm{L_{\{1,\ldots,d\}}}_{\mM^{-1}} \sqrt{d} R_{\mM}}{n\varepsilon} \log^{\nicefrac{1}{2}} \frac{1}{\delta} \right).
  $$
  With importance sampling, the result becomes
  $$
  \cO \left( \frac{\norm{L_{\{1,\ldots,d\}}}_{\mM^{-1}} \sqrt{\tr{\mM}} R_{\mI}}{n\varepsilon} \log^{\nicefrac{1}{2}} \frac{1}{\delta} \right).
  $$
  
  \item \textbf{Full Sampling:} 
  When we sample the full set $S = [d]$ with probability 1, we have
  $$
  \Sigma_S^2 = \norms{L \one}_{\mM^{-1}} = L^2\tr{\mM^{-1}},
  $$
  leading to the bound
  $$
  \cO \left( \frac{L\sqrt{\tr{\mM^{-1}}} R_{\mM}}{n\varepsilon} \log^{\nicefrac{1}{2}} \frac{1}{\delta} \right).
  $$
\end{itemize}

These and other special cases are summarized in \Cref{table}.

\subsection{Comparison}
\label{section:comparison}
Let us now discuss the scenarios in which our method outperforms previous approaches.

\subsubsection{Comparing to DP-SGD}
We consider the convex case, where the comparison reduces to analyzing 
$$
\sqrt{\tr{\mM^{-1}}} R_{\mM} \quad \text{versus} \quad \sqrt{d} R_{\mI}.
$$
If $M = M_1 = \cdots = M_d$, these two quantities are identical.

Now, suppose we have $M_j := M_{\max} \gg M_{\min} := M_1$ for all $j \neq 1$, such that $\tr{\mM^{-1}} \approx \frac{1}{M_{\min}}$. In this case, the first coordinate is small while the others are significantly larger. Imagine that for the initial iterate $w^0$, the coordinates $j \neq 1$ are already very close to their optimal values, such that
$$
\sum_{j \neq 1} M_j | w_j^0 - w_j^* | \ll M_1 | w_1^0 - w_1^\star|.
$$
Under this assumption,
$$
R_{\mM}^2 \approx M_1 | w_1^0 - w_1^\star|^2 \approx M_{\min} R_{\mI}^2.
$$
Thus, we obtain
$$
\sqrt{\tr{\mM^{-1}}} R_{\mM} \approx R_{\mI},
$$
resulting in a rate that is approximately $\sqrt{d}$ times faster. 
This outcome is expected because we assume that the problem is nearly solved for all coordinates except one, and that solving this single coordinate is easy due to the small value of $M_1$. 
Essentially, we are running \algname{SGD} on just one coordinate, leading to the observed acceleration by a factor of $\sqrt{d}$.

A similar argument can be made for the strongly convex case.
We need to use that, if $f$ is $\mu$-strongly convex then it is $\frac{\mu}{M_{\max}}$-strongly strongly convex with respect to the norm $\|\cdot\|_{\mM}$.

\paragraph{Block Sampling.}
Assume that $M_{\max} := M_i \gg M_{\min} := M_j$ for all $i \in A_1$ and $j \notin A_1$. 
Additionally, let $L_{\max} := L_1 \gg L_i$ for all $i \neq 1$. If $L_1^2 / M_{\max}$ dominates the other terms, we have
$$
\norm{L_{\{A_1,\ldots,A_b\}}}_{\mM^{-1}} \approx \frac{L_{\max}}{M_{\max}} \approx \frac{L}{M_{\max}}.
$$
Furthermore, if $w^0$ is such that 
$$
\norm{L_{\{A_1,\ldots,A_b\}}}_{\mM^{-1}} R_{\mM} \approx \sqrt{\frac{M_{\min}}{M_{\max}}} L R_{\mI},
$$
then \algname{DP-SkGD-BS} with block sampling can significantly outperform \algname{DP-SGD} or \algname{DP-SVRG}, achieving arbitrarily better results in certain cases.

\subsubsection{Comparison with DP-CD}

\paragraph{Importance Sampling.}
Our method gains an advantage over \algname{DP-CD} due to the use of importance sampling.

To illustrate, consider the case where $b = d$ (i.e., single coordinate sampling). 
Assume that $M_1 \gg M_j$ for all $j \neq 1$, and similarly, $| w_1^0 - w_1^\star | \gg | w_j^0 - w_j^\star |$ for all $j \neq 1$. Moreover, suppose $M_1 | w_1^0 - w_1^\star | \gg M_j| w_j^0 - w_j^\star |$. 
Then, in the convex case, we get
$$
R_{\mI} \sqrt{\tr{\mM}} \approx \sqrt{M_1 | w_1^0 - w_1^\star |} \approx R_{\mM}.
$$
Thus, \algname{DP-SkGD-BS} with importance sampling can be up to $\sqrt{d}$ times faster.
In this scenario, the dominant contribution to the error comes from the first coordinate, and importance sampling accelerates convergence by prioritizing updates on this component. 
In contrast, \algname{DP-CD} would sample all coordinates with equal probability, making it less efficient in handling such imbalances.

\paragraph{Block Sampling.}
Block sampling offers significant advantages when the smoothness characteristics of the objective function are naturally partitioned into distinct blocks. 
In these situations, block sampling can outperform single coordinate descent, enabling more effective updates within each block and improving overall optimization efficiency. 
This approach allows for a better exploitation of the underlying structure of the problem, leading to faster convergence rates.

\section{Conclusion and Future Work} \label{section_conclusion_future_work}

Our work aims to bridge the gap between non-private and private coordinate descent methods.
We develop a random block coordinate descent method that allows for the selection of multiple coordinates with varying probabilities in each iteration using sketch matrices.
This approach generalizes the existing method of updating a single coordinate drawn uniformly at random also \algname{DP-SGD} method.

In our study, we do not address the composite Empirical Risk Minimization (ERM) problem.
To tackle the composite ERM problem, a promising direction is to incorporate data-dependent sketches, as demonstrated by \citet{safaryan2021smoothness}, along with other variance reduction techniques such as those outlined by \citet{hanzely2018sega}.
We leave this exploration for future work.
Another potential generalization involves considering not only diagonal matrix smoothness $\mM$ but also general matrix smoothness.
This would provide more detailed information about the function and lead to exploring more general descent directions beyond the coordinate direction, which would require new definitions of sensitivity and privacy analysis.



\bibliography{bib}


\onecolumn
\appendix

\tableofcontents
\newpage

\section{Table of Frequently Used Notation}

\begin{table}[H]
    \label{table:notation}
    \begin{center}
    \begin{tabular}{ll}
    \multicolumn{1}{c}{\bf Notation} & \multicolumn{1}{c}{\bf Meaning} \\ \hline \\
    $[d]$               & $\{1,\ldots, d\}$ \\
    $\mI$               & identity matrix \\
    $\one$              & vector of all ones, i.e., $\one = (1, \ldots, 1)^\top$ \\
    $\mP$               & $\Diag{p_1,\ldots, p_d}$ \\
    $\mGamma$           & $\Diag{\gamma_1,\ldots, \gamma_d}$ \\
    $w^{\star}$         & a minimizer of $f$ \\
    \end{tabular}
    \end{center}
\end{table}

\section{Numerical Experiments}
\label{section:experiments}

We will show the experimental results during the rebuttal.

\section{Lemmas on Sensitivity}
\label{section:proof_sensitivity}

\begin{lemma}
  \label{lemma:bounded_gradient_norm}
Let $\ell: \R^d \times \cX \rightarrow \R$ be differentiable in its first argument (\Cref{ass:differentiability}), convex (\Cref{ass:convexity}) and $L_{\cS}$-component-Lipschitz with $L_U > 0$ for all $U \in \range{\cS}$ (\Cref{ass:comp_lipschitz}).
Then 
$$
\norm{\mI_{U}\nabla \ell(w; \zeta)} \leq L_U
$$
for all $w \in \R^d$, $\zeta \in \cX$, and $U \in \range{\cS}$.
\end{lemma}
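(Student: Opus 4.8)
The plan is to deduce the gradient bound from the component-Lipschitz assumption (\Cref{ass:comp_lipschitz}) by passing from finite differences to a directional derivative, which is legitimate because $\ell(\cdot;\zeta)$ is differentiable (\Cref{ass:differentiability}). First I would fix $w \in \R^d$, $\zeta \in \cX$, $U \in \range{\cS}$, and an arbitrary direction $v \in \R^d$, and apply the component-Lipschitz inequality to the scaled perturbation $tv$ for $t > 0$:
$$
\abs{\ell(w + t\mI_{U} v; \zeta) - \ell(w;\zeta)} \leq L_U \norm{\mI_{U} (t v)} = t\, L_U \norm{\mI_{U} v}.
$$
Dividing by $t$ and letting $t \to 0^+$, differentiability turns the left-hand side into the directional derivative $\abs{\scalar{\nabla \ell(w;\zeta)}{\mI_{U} v}}$, so that
$$
\abs{\scalar{\nabla \ell(w;\zeta)}{\mI_{U} v}} \leq L_U \norm{\mI_{U} v} \quad \text{for all } v \in \R^d.
$$

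Next I would exploit that $\mI_{U}$ is a diagonal $0/1$ matrix, hence symmetric and idempotent ($\mI_{U} = \mI_{U}^\top = \mI_{U}^2$). This lets me rewrite $\scalar{\nabla \ell(w;\zeta)}{\mI_{U} v} = \scalar{\mI_{U}\nabla \ell(w;\zeta)}{v}$, so the previous bound controls the linear functional $v \mapsto \scalar{\mI_{U}\nabla \ell(w;\zeta)}{v}$. The cleanest way to finish is to substitute the specific direction $v = \mI_{U}\nabla \ell(w;\zeta)$. By idempotency, $\mI_{U} v = v$, $\norm{\mI_{U} v} = \norm{\mI_{U}\nabla \ell(w;\zeta)}$, and $\scalar{\nabla \ell(w;\zeta)}{\mI_{U} v} = \norms{\mI_{U}\nabla \ell(w;\zeta)}$, so the inequality becomes
$$
\norms{\mI_{U}\nabla \ell(w;\zeta)} \leq L_U \norm{\mI_{U}\nabla \ell(w;\zeta)}.
$$
Cancelling one factor of $\norm{\mI_{U}\nabla \ell(w;\zeta)}$ (the claim is trivial when this norm is zero) yields $\norm{\mI_{U}\nabla \ell(w;\zeta)} \leq L_U$. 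Since $w$, $\zeta$, and $U$ were arbitrary, the bound holds uniformly.

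I do not expect a substantive obstacle here; in fact convexity (\Cref{ass:convexity}) is not needed for this particular bound. The only point requiring care is the limit step: I must verify that the finite-difference quotient genuinely converges to $\scalar{\nabla \ell(w;\zeta)}{\mI_{U} v}$, which is exactly what differentiability provides, and apply the scaling $\norm{\mI_{U}(tv)} = t\norm{\mI_{U} v}$ for $t > 0$ before taking the limit. An equivalent, limit-free route is a dual-norm argument: restricting the displayed inequality to directions $v$ supported on $U$ gives $\scalar{\mI_{U}\nabla \ell(w;\zeta)}{v} \leq L_U \norm{v}$, and taking the supremum over $\norm{v} \leq 1$ recovers $\norm{\mI_{U}\nabla \ell(w;\zeta)} \leq L_U$; I would present whichever phrasing reads more cleanly in context.
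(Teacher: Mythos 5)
Your proof is correct, but it takes a different route from the paper's. The paper's argument is limit-free and leans on convexity: it applies the gradient inequality $\ell(w + \mI_{U}\nabla \ell(w;\zeta);\zeta) \ge \ell(w;\zeta) + \norms{\mI_{U}\nabla \ell(w;\zeta)}$ (convexity, \Cref{ass:convexity}) to lower-bound the function increment, then caps that same increment by $L_U \norm{\mI_{U}\nabla \ell(w;\zeta)}$ via \Cref{ass:comp_lipschitz}, and cancels one factor of the norm --- exactly the same final cancellation you perform, but reached in two lines with no limiting argument. You instead never invoke convexity: you pass from the finite-difference bound to the directional derivative $\scalar{\nabla \ell(w;\zeta)}{\mI_{U}v}$ using only differentiability (\Cref{ass:differentiability}), then test against $v = \mI_{U}\nabla \ell(w;\zeta)$ (or equivalently take a supremum over unit directions supported on $U$). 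Your limit step is sound --- differentiability gives $\ell(w + t\mI_{U}v;\zeta) = \ell(w;\zeta) + t\scalar{\nabla \ell(w;\zeta)}{\mI_{U}v} + o(t)$, so the quotient converges as claimed --- and your explicit handling of the zero-norm case is slightly more careful than the paper's, which divides without comment. What your version buys is generality: it shows the lemma holds for any differentiable $L_{\cS}$-component-Lipschitz loss, convex or not, which is a mild but genuine strengthening; what the paper's version buys is brevity, and since convexity is hypothesized in the lemma anyway, nothing is lost there by using it.
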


\begin{proof}
Let $U \in \range{\cS}$.
By the convexity of $\ell$, we have
$$
\ell\(w + \mI_{U}\nabla \ell(w; \zeta) ; \zeta \) \geq \ell(w; \zeta) + \langle \nabla \ell(w; \zeta), \mI_{U}\nabla \ell(w; \zeta)  \rangle = \ell(w; \zeta) + \norms{\mI_{U}\nabla \ell(w; \zeta)}.
$$
Reorganizing the terms and using $L$-component-Lipschitzness of $\ell$ gives
$$
\norms{\mI_{U}\nabla \ell(w; \zeta)} \leq \ell\(w + \mI_{U}\nabla \ell(w; \zeta) ; \zeta \) - \ell(w; \zeta) \leq L_U \norm{\mI_{U}\nabla \ell(w; \zeta)}.
$$
Finally, dividing both sides by $\norm{\mI_{U}\nabla \ell(w; \zeta)}$ yields the desired result.
\end{proof}

\begin{restate-lemma}{\ref{lem:sensitivity}}
  Let $\ell: \R^d \times \cX \rightarrow \R$ be differentiable in its first argument (\Cref{ass:differentiability}), convex (\Cref{ass:convexity}), and $L_{\cS}$-component-Lipschitz with $L_U > 0$ for all $U \in \range{\cS}$ (\Cref{ass:comp_lipschitz}).
  Then 
  $$
  \Delta_U\(\nabla \ell \) \leq 2 L_U
  $$
  for all $U \in \range{\cS}$.
\end{restate-lemma}

\begin{proof}
Let $w, w' \in \R^d$, $\zeta, \zeta' \in \cX$. 
From the triangle inequality and \Cref{lemma:bounded_gradient_norm}, we get the following upper bounds:
$$
\norm{\mI_{U}\(\nabla \ell(w; \zeta) - \nabla \ell(w'; \zeta')\)} \leq \norm{\mI_{U}\nabla \ell(w; \zeta)} + \norm{\mI_{U}\nabla \ell(w'; \zeta')} \leq 2L_U.
$$
It remains to recall that
$$
\Delta_U(h) \eqdef \sup_{D \sim D'} \norm{\mI_U \(h(D) - h(D')\)}_2.
$$
\end{proof}

\section{Privacy Analysis}

The main steps of analysis are based on the concept of R{\'e}nyi Differential Privacy (RDP) and the composition theorem for RDP. 

\begin{definition}[R{\'e}nyi divergence \citep{renyi1961measures, van2014renyi}]
    For two probability distributions $R$ and $T$ defined over $\cV$, the R{\'e}nyi divergence of order $\alpha > 1$ is 
    \begin{equation}
        D_{\alpha}\left(R\|T\right) = \frac{1}{\alpha - 1} \log \int\limits_{\cV} R(x)^{\alpha}T(x)^{1-\alpha} d x,
    \end{equation}
    where $R(x)$ and $T(x)$ are the density functions of $R$ and $T$ respectively at $x$.
\end{definition}

Now, we turn to R{\'e}nyi Differential Privacy:

\begin{definition}[R{\'e}nyi Differential Privacy, \citet{mironov2017renyi}]
  A randomized algorithm $\cA: \cD \rightarrow \cV$ is $(\alpha, \varepsilon)$-R{\'e}nyi DP (RDP) if, for any two neighboring datasets $D, D^\prime \in \cD$, the following inequality holds: 
  \begin{equation}
      D_{\alpha}\left(\cA(D)\|\cA(D')\right) \leq \varepsilon.
  \end{equation}
\end{definition}
Two datasets $D, D' \in \cD$ are called \textit{neighboring} (denoted by $D \sim D'$) if they differ on at most one element.


The expression below provides the closed-form R{'e}nyi divergence between a Gaussian distribution and its shifted counterpart (for a more general formulation, refer to \cite{liese1987convex, van2014renyi}).

\begin{lemma}[Proposition 7, \citet{mironov2017renyi}]
  \label{lemma:renyi_normal}
  $D_{\alpha}\(\cN\(0, \sigma^2 \mI \) \| \cN(\mu, \sigma^2 \mI)\) = \frac{\alpha}{2\sigma^2} \norms{\mu}$.
\end{lemma}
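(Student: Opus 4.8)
The plan is to prove the identity by a direct computation from the definition of the R\'enyi divergence, exploiting the fact that both distributions are Gaussians with the same covariance $\sigma^2 \mI$ and differ only by a shift of the mean. Writing $p$ for the ambient dimension, the densities are $R(x) = (2\pi\sigma^2)^{-p/2}\exp(-\norms{x}/(2\sigma^2))$ and $T(x) = (2\pi\sigma^2)^{-p/2}\exp(-\norms{x-\mu}/(2\sigma^2))$. Because the two normalizing constants are identical, they combine cleanly in the integrand $R(x)^{\alpha}T(x)^{1-\alpha}$, and the whole problem reduces to evaluating a single Gaussian-type integral.

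First I would form the integrand and collect the exponent, which equals $-\frac{1}{2\sigma^2}\left(\alpha\norms{x} + (1-\alpha)\norms{x-\mu}\right)$. Expanding $\norms{x-\mu} = \norms{x} - 2\inp{x}{\mu} + \norms{\mu}$ and simplifying, the coefficients of $\norms{x}$ add to $1$, leaving $\norms{x} - 2(1-\alpha)\inp{x}{\mu} + (1-\alpha)\norms{\mu}$. The key step is to complete the square in $x$: this quadratic equals $\norms{x-(1-\alpha)\mu} + \alpha(1-\alpha)\norms{\mu}$, where the cross term $-2(1-\alpha)\inp{x}{\mu}$ is absorbed into the shifted square and the residual constants $(1-\alpha)\norms{\mu} - (1-\alpha)^2\norms{\mu}$ are gathered into $\alpha(1-\alpha)\norms{\mu}$.

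With the exponent in this form, the $x$-dependent factor $(2\pi\sigma^2)^{-p/2}\exp(-\norms{x-(1-\alpha)\mu}/(2\sigma^2))$ is exactly the density of $\cN((1-\alpha)\mu, \sigma^2\mI)$, so its integral over $\R^p$ is $1$. What remains is the constant factor $\exp(-\alpha(1-\alpha)\norms{\mu}/(2\sigma^2)) = \exp(\alpha(\alpha-1)\norms{\mu}/(2\sigma^2))$. Taking the logarithm and multiplying by $1/(\alpha-1)$ cancels the $(\alpha-1)$ factor and yields $D_{\alpha}(R\|T) = \frac{\alpha}{2\sigma^2}\norms{\mu}$, as claimed.

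I expect no genuine obstacle here; the only point requiring care is the completing-the-square bookkeeping, making sure the residual constant is $\alpha(1-\alpha)\norms{\mu}$ rather than the intermediate $(1-\alpha)^2\norms{\mu}$, and tracking the sign so that the final divergence is positive, as it must be since $\alpha > 1$ forces $\alpha(1-\alpha) < 0$ and hence a positive exponent after the overall $-1/(2\sigma^2)$ prefactor is applied.
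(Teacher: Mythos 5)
Your proposal is correct and follows essentially the same route as the paper's proof: a direct computation from the definition, combining the identical normalizing constants, completing the square to extract the factor $\exp\left\{-\frac{\alpha(1-\alpha)}{2\sigma^2}\norms{\mu}\right\}$, and cancelling the $(\alpha-1)$ against the prefactor. The bookkeeping you flag (the residual constant being $\alpha(1-\alpha)\norms{\mu}$) matches the paper's computation exactly.
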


\begin{proof}
  The density function of multivariate Gaussian distribution is 
  \begin{eqnarray*}
    \phi(x) &=& \frac{1}{\(2\pi \sigma^2\)^{\frac{d}{2}}} \exp \left\{ - \frac{1}{2\sigma^2} \norms{x - \mu} \right\}.
  \end{eqnarray*}

  By direct computation we verify that
  \begin{align*}
    D_{\alpha}\(\cN\(0, \sigma^2 \mI\) \| \cN(\mu, \sigma^2 \mI)\)
    &= \frac{1}{\alpha - 1} \log \int_{\R^d} \frac{1}{\(2\pi \sigma^2\)^{\frac{d}{2}}} \exp \left\{ - \alpha \frac{1}{2\sigma^2} \norms{x} \right\} \\
        &\qquad\qquad\qquad \cdot \exp \left\{- \(1-\alpha\) \frac{1}{2\sigma^2} \norms{x-\mu} \right\}\\
    &= \frac{1}{\alpha - 1} \log \int_{\R^d} \frac{1}{\(2\pi \sigma^2\)^{\frac{d}{2}}} \exp \left\{ - \frac{1}{2\sigma^2} \(\norms{x} - 2(1-\alpha)\<x,\mu\> + (1-\alpha)\norms{\mu}\) \right\} \\
    &= \frac{1}{\alpha - 1} \log \int_{\R^d} \frac{1}{\(2\pi \sigma^2\)^{\frac{d}{2}}} \exp \left\{ - \frac{1}{2\sigma^2} \(\norms{x - \(1-\alpha\)\mu} + \alpha(1-\alpha)\norms{\mu}\) \right\} \\
    &= \frac{1}{\alpha - 1} \log \( \exp\left\{- \frac{1}{2\sigma^2}\alpha(1-\alpha)\norms{\mu} \right\} \) \\
    &= \frac{\alpha}{2\sigma^2} \norms{\mu}.
    \end{align*}
\end{proof}

Using this, we can derive privacy guarantees for the Gaussian mechanism. 
First, let us define the Gaussian mechanism:

\begin{definition}
  \label{def:gaussian}
  Let $f: \cD \rightarrow \R^d$ and $D \in \cD$. For any nonempty subset $U \subset [d]$, the Gaussian mechanism for answering the query $f$ with noise scale $\sigma > 0$, applied to the coordinates in $U$, is defined as:
  $$
  \cG_{U}\left(D\right) = f(D) + \mI_{U}\cN\left(0, \sigma^2 \mI \right),
  $$
  where $\mI_U = \Diag{\delta_1,\delta_2,\ldots,\delta_n}$ with 
  $$
  \delta_i = 
  \begin{cases}
  1, & \text{if} \quad i \in U, \\
  0, & \text{if} \quad i \notin U.
  \end{cases}
  $$
\end{definition}

We derive the following corollary from \Cref{lemma:renyi_normal}:

\begin{corollary}
\label{corollary:rdp}
For any nonempty subset $U \subset [d]$, the Gaussian mechanism for answering the query $f$ with noise scale $\sigma>0$ applied to the coordinates in $U$ is 
$$
\left(\alpha, \frac{\alpha \Delta_U^2(f)}{2\sigma^2}\right) - \text{RDP,}
$$
where
$$
\Delta_{U}(f) = \sup_{D \sim D'} \norm{\mI_{U}\(f(D) - f(D')\)}_2.
$$
\end{corollary}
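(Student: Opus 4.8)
The plan is to observe that for any fixed pair of neighboring datasets $D \sim D'$, the two output laws $\cG_U(D)$ and $\cG_U(D')$ are Gaussians that share the same covariance and differ only in their means, so that \Cref{lemma:renyi_normal} applies almost verbatim after a translation. I would fix $D \sim D'$ and write the two laws explicitly: since $\cG_U(D) = f(D) + \mI_U \cN(0,\sigma^2\mI)$, the randomness is supported only on the coordinates indexed by $U$. Restricting attention to that block, $\cG_U(D)$ is distributed as $\cN(\mI_U f(D), \sigma^2 \mI)$ and $\cG_U(D')$ as $\cN(\mI_U f(D'), \sigma^2 \mI)$ on the $\abs{U}$-dimensional subspace, so they are centered Gaussians with identical covariance $\sigma^2\mI$ differing only by the translation of the mean.

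Next I would invoke the translation invariance of the Rényi divergence: shifting both arguments by $-\mI_U f(D)$ (a bijection with unit Jacobian) turns the pair into $\cN(0,\sigma^2\mI)$ and $\cN(\mu,\sigma^2\mI)$ with $\mu \eqdef \mI_U(f(D')-f(D))$, which is exactly the setting of \Cref{lemma:renyi_normal}. Applying that lemma gives $D_{\alpha}(\cG_U(D)\|\cG_U(D')) = \frac{\alpha}{2\sigma^2}\norms{\mI_U(f(D)-f(D'))}$. Finally, using the definition $\Delta_U(f) = \sup_{D\sim D'}\norm{\mI_U(f(D)-f(D'))}_2$ to bound $\norms{\mI_U(f(D)-f(D'))} \le \Delta_U^2(f)$ for every neighboring pair yields $D_{\alpha}(\cG_U(D)\|\cG_U(D')) \le \frac{\alpha\Delta_U^2(f)}{2\sigma^2}$, which is precisely the claimed $\left(\alpha, \frac{\alpha\Delta_U^2(f)}{2\sigma^2}\right)$-RDP guarantee.

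The main obstacle is the degeneracy of the noise: because $\mI_U\cN(0,\sigma^2\mI)$ has zero variance off $U$, the full $d$-dimensional laws $\cG_U(D)$ and $\cG_U(D')$ need not be mutually absolutely continuous, since outside $U$ they are deterministic and possibly unequal. Hence \Cref{lemma:renyi_normal} cannot be applied blindly in $\R^d$. I would resolve this by carrying out the computation on the $U$-coordinates alone — the only block carrying noise and the only block entering the sensitivity $\Delta_U$ — where both laws are non-degenerate Gaussians and the lemma applies with ambient dimension $\abs{U}$. This restriction is exactly what the downstream application needs, since in \Cref{algorithm} the sketch $\mC = \mI_S\mP^{-1}$ zeroes out the remaining coordinates, so nothing is released off $U$.
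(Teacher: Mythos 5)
Your proof is correct, and its core is the same (and essentially the only) route the paper intends: reduce to the closed-form Gaussian divergence of \Cref{lemma:renyi_normal} by a translation, then bound the mean shift by the sensitivity $\Delta_U(f)$. Note, however, that the paper offers no explicit proof of \Cref{corollary:rdp} at all (it is simply announced as a consequence of \Cref{lemma:renyi_normal}), and your argument supplies a rigor patch the paper silently skips. As literally written in \Cref{def:gaussian}, the mechanism $\cG_U(D) = f(D) + \mI_U \cN\(0,\sigma^2\mI\)$ releases the coordinates outside $U$ deterministically and without noise; whenever $f(D)$ and $f(D')$ disagree on some coordinate $j \notin U$, the two output laws on $\R^d$ are mutually singular, the R\'enyi divergence is $+\infty$, and the corollary is false under that literal reading. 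Your fix --- computing the divergence for the mechanism restricted to the $U$-block, where both laws are nondegenerate Gaussians $\cN\(\mI_U f(D), \sigma^2\mI\)$ and $\cN\(\mI_U f(D'), \sigma^2\mI\)$ with common covariance --- is exactly the statement that is both true and needed downstream: in \Cref{algorithm} the update multiplies the noisy gradient by $\mC = \mI_S \mP^{-1}$, so coordinates outside $S$ are never released, and the full iterate is data-independent post-processing of the restricted release. The only cosmetic improvement I would suggest is to state explicitly that you are proving RDP for the map $D \mapsto \mI_U\(f(D) + \eta\)$ (rather than for $\cG_U$ as defined in the paper), so that the mismatch with \Cref{def:gaussian} is flagged rather than quietly repaired.
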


Now we state the composition theorem for the sequence of RDP algorithms. 

\begin{proposition}[Proposition 1, \citet{mironov2017renyi}]
\label{proposition:composition}
    Let $\cA_1,\dots\cA_K: \cD \rightarrow \cV$ be $K > 0$ randomized algorithms, such that for each $k$ algorithm $\cA_k$ is $(\alpha, \varepsilon_k(\alpha))$-RDP, where these algorithms can be chosen in an adaptive way. 
    Let $\cA: \cD \rightarrow \cV^K$ be a randomized algorithm such that $\cA(\cD) = \left(\cA_1(D),\dots, \cA_K(D)\right)$. 
    Then $\cA$ is $\left(\alpha, \sum\limits_{k=1}^K \varepsilon_k(\alpha)\right)$-RDP. 
\end{proposition}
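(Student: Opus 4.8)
The plan is to prove the claim by induction on the number of mechanisms $K$, working with the exponentiated form of the R\'enyi divergence so that the chain-rule factorization of the joint output density turns into a product of conditional integrals. Fix neighboring datasets $D \sim D'$ and an order $\alpha > 1$. Write $p(y_1,\ldots,y_K)$ and $q(y_1,\ldots,y_K)$ for the densities of $\cA(D)$ and $\cA(D')$ on $\cV^K$, and abbreviate $y_{<k} = (y_1,\ldots,y_{k-1})$. Because the mechanisms are composed adaptively, each output is generated conditionally on the realized prefix, so both densities factor as $p(y) = \prod_{k=1}^K p_k(y_k \mid y_{<k})$ and $q(y) = \prod_{k=1}^K q_k(y_k \mid y_{<k})$, where $p_k(\cdot \mid y_{<k})$ and $q_k(\cdot \mid y_{<k})$ are the conditional densities of $\cA_k(D)$ and $\cA_k(D')$ given the prefix $y_{<k}$.

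The first step is to record the single-mechanism guarantee in integral form: for each $k$ and every fixed prefix $y_{<k}$, the $(\alpha, \varepsilon_k(\alpha))$-RDP property of $\cA_k$ (with $y_{<k}$ treated as auxiliary input) means
$$
\int_{\cV} p_k(y_k \mid y_{<k})^{\alpha}\, q_k(y_k \mid y_{<k})^{1-\alpha}\, dy_k \;\le\; \exp\!\big((\alpha-1)\varepsilon_k(\alpha)\big).
$$
Next I would peel off the last mechanism: substituting the two factorizations into $\int_{\cV^K} p(y)^\alpha q(y)^{1-\alpha}\, dy$ and noting that the factors for $k<K$ do not depend on $y_K$, Tonelli's theorem (all integrands are nonnegative) lets me carry out the $y_K$-integral innermost. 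That inner integral is exactly the quantity above with $k=K$, hence bounded by $\exp((\alpha-1)\varepsilon_K(\alpha))$ \emph{uniformly in the prefix} $y_{<K}$; pulling this constant out leaves precisely the exponentiated-divergence integral for the composition of $\cA_1,\ldots,\cA_{K-1}$. Applying the induction hypothesis to that residual integral gives
$$
\int_{\cV^K} p(y)^\alpha q(y)^{1-\alpha}\, dy \;\le\; \exp\!\Big((\alpha-1)\textstyle\sum_{k=1}^{K}\varepsilon_k(\alpha)\Big),
$$
and taking $\tfrac{1}{\alpha-1}\log(\cdot)$, which is monotone since $\alpha>1$, yields $D_\alpha(\cA(D)\|\cA(D')) \le \sum_{k=1}^K \varepsilon_k(\alpha)$, as desired. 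The base case $K=1$ is the hypothesis itself.

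The main obstacle is handling the adaptivity correctly rather than the algebra: the RDP bound for $\cA_K$ must hold for \emph{every} realization of the prefix $y_{<K}$ with the same constant $\varepsilon_K(\alpha)$, so that the inner integral can be extracted as a prefix-independent factor before integrating over $y_{<K}$. This is exactly what the adaptive $(\alpha,\varepsilon_k(\alpha))$-RDP assumption supplies—each $\cA_k$ is RDP viewed as a mechanism in $D$ for every fixed auxiliary input—and it is the only place where adaptivity enters. Secondary care is needed to ensure the conditional densities are well defined and that the interchange of integration order is justified, both of which follow from nonnegativity of the integrands via Tonelli's theorem; no absolute-continuity difficulties arise beyond those already implicit in the finiteness of each $D_\alpha(\cA_k(D)\|\cA_k(D'))$.
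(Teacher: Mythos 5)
Your proof is correct. Note that the paper does not prove this proposition at all — it imports it verbatim as Proposition 1 of \citet{mironov2017renyi} — and your argument (chain-rule factorization of the joint output density, the prefix-uniform bound on the innermost integral supplied by the adaptive RDP hypothesis, Tonelli's theorem, and induction on $K$) is essentially the same proof given in that reference, so there is nothing to bridge between your approach and the paper's.
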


The following Lemma gives the relationship between RDP and $(\varepsilon,\delta)$-DP.

\begin{lemma}[Proposition 3, \citet{mironov2017renyi}]
    If $\cA$ is $(\alpha, \varepsilon)$-RDP, then it is also $\left(\varepsilon +\frac{\log (1/\delta)}{\alpha-1}, \delta \right)$-DP for any $0 < \delta < 1$. 
\end{lemma}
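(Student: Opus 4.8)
The plan is to prove this via the standard thresholding argument that converts a Rényi divergence bound into an $(\varepsilon',\delta)$-DP guarantee, with $\varepsilon' = \varepsilon + \frac{\log(1/\delta)}{\alpha-1}$. Fix neighboring datasets $D \sim D'$ and let $R$ and $T$ denote the densities of $\cA(D)$ and $\cA(D')$. Unpacking the hypothesis of $(\alpha,\varepsilon)$-RDP through the definition of Rényi divergence gives exactly
$$
\int_{\cV} R(x)^{\alpha} T(x)^{1-\alpha} \, dx \leq e^{(\alpha-1)\varepsilon}.
$$
The goal is to show $\Prob{\cA(D) \in U} \leq e^{\varepsilon'}\Prob{\cA(D')\in U} + \delta$ for every measurable $U \subseteq \cV$.

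First I would introduce the \emph{bad set} $B \eqdef \{x \in \cV : R(x) > e^{\varepsilon'} T(x)\}$ and split the target as $\Prob{\cA(D)\in U} = \int_{U \cap B^c} R(x)\, dx + \int_{U \cap B} R(x)\, dx$. On $B^c$ the first term is immediate: since $R(x) \le e^{\varepsilon'} T(x)$ pointwise there, $\int_{U \cap B^c} R(x)\, dx \le e^{\varepsilon'}\int_{U\cap B^c} T(x)\, dx \le e^{\varepsilon'}\Prob{\cA(D')\in U}$. It then remains to show that the mass $\Prob{\cA(D) \in B} = \int_B R(x)\, dx$ is at most $\delta$.

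The hard part will be bounding $\Prob{\cA(D)\in B}$ using only the Rényi inequality above. The key observation is that on $B$ we have $T(x) < e^{-\varepsilon'} R(x)$; raising this to the negative power $1-\alpha$ reverses the inequality (since $\alpha > 1$), yielding $T(x)^{1-\alpha} > e^{(\alpha-1)\varepsilon'} R(x)^{1-\alpha}$ and hence $R(x)^{\alpha} T(x)^{1-\alpha} > e^{(\alpha-1)\varepsilon'} R(x)$ on $B$. Integrating over $B$ and invoking the first display gives
$$
e^{(\alpha-1)\varepsilon} \geq \int_{B} R(x)^{\alpha} T(x)^{1-\alpha}\, dx \geq e^{(\alpha-1)\varepsilon'} \Prob{\cA(D)\in B},
$$
so that $\Prob{\cA(D)\in B} \leq e^{(\alpha-1)(\varepsilon - \varepsilon')}$. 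Finally I would verify that the stated $\varepsilon'$ is calibrated so that $(\alpha-1)(\varepsilon-\varepsilon') = \log\delta$, which turns this into $\Prob{\cA(D)\in B}\leq \delta$; combining with the $B^c$ bound completes the proof. The only subtlety to track carefully is the reversal of the inequality when raising to the power $1-\alpha < 0$ — everything else is routine bookkeeping.
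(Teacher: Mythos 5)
Your proof is correct. The paper itself offers no proof of this lemma --- it is imported verbatim as Proposition~3 of \citet{mironov2017renyi} --- and your argument is essentially the standard one from that source: the bad-set decomposition $B = \{x : R(x) > e^{\varepsilon'}T(x)\}$ together with the bound $\int_B R \le e^{(\alpha-1)(\varepsilon-\varepsilon')}$ is exactly Markov's inequality applied to the $(\alpha-1)$-th moment of the likelihood ratio, and your calibration $(\alpha-1)(\varepsilon-\varepsilon') = \log\delta$ checks out, as does the sign reversal when raising to the power $1-\alpha<0$.
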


Since this result holds for any $\alpha$ is it possible to find the minimum with respect to it. 
We will use the following result.

\begin{lemma}[Corollary B.8, \citet{mangold_dp-cd}]
\label{lem:ed_dp}
    Let $0 <\varepsilon< 1$, $0 < \delta < \frac{1}{3}$. If a randomized algorithm $\cA: \cD \rightarrow \cV$ is $\left(\alpha, \frac{\gamma\alpha}{2\sigma^2}\right)$-RDP with $\gamma > 0$ and $\sigma = \frac{\sqrt{3\gamma\log(1/\delta)}}{\varepsilon}$, it is also $(\varepsilon, \delta)$-DP.
\end{lemma}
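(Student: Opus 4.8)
The plan is to combine the generic RDP-to-$(\varepsilon,\delta)$-DP conversion (Proposition 3 of \citet{mironov2017renyi}, stated just above) with an explicit, well-chosen value of the Rényi order $\alpha$. By that conversion, an $(\alpha, \frac{\gamma\alpha}{2\sigma^2})$-RDP algorithm is $(g(\alpha),\delta)$-DP for every $\alpha>1$, where $g(\alpha) = \frac{\gamma\alpha}{2\sigma^2} + \frac{\log(1/\delta)}{\alpha-1}$. Since this holds simultaneously for all admissible $\alpha$, it suffices to produce a single $\alpha>1$ with $g(\alpha)\le\varepsilon$; there is no need to solve the exact minimization.

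First I would substitute the prescribed noise scale $\sigma^2 = 3\gamma\log(1/\delta)/\varepsilon^2$. This cancels $\gamma$ and collapses the first term, giving $g(\alpha) = \frac{\alpha\varepsilon^2}{6\log(1/\delta)} + \frac{\log(1/\delta)}{\alpha-1}$. Rather than differentiate, I would pick $\alpha = 1 + \frac{2\log(1/\delta)}{\varepsilon}$, which is $>1$ since $\log(1/\delta)>0$. This choice forces the second term to equal $\varepsilon/2$ and expands the first term into $\frac{\varepsilon^2}{6\log(1/\delta)} + \frac{\varepsilon}{3}$, so that $g(\alpha) = \frac{5\varepsilon}{6} + \frac{\varepsilon^2}{6\log(1/\delta)}$.

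It then remains to verify $g(\alpha)\le\varepsilon$, which reduces to $\frac{\varepsilon^2}{6\log(1/\delta)}\le\frac{\varepsilon}{6}$, i.e.\ $\varepsilon\le\log(1/\delta)$. This is exactly where the two hypotheses enter: $\varepsilon<1$ and $\delta<1/3$ give $\varepsilon<1<\log 3<\log(1/\delta)$, closing the inequality. One should also note $0<\delta<1$ is in force so the conversion lemma applies, which is immediate from $\delta<1/3$.

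The main obstacle is not any individual estimate but the calibration of the constant $3$ inside $\sigma$: it is tuned precisely so that the balanced choice of $\alpha$ lands below $\varepsilon$ in the regime $\varepsilon<1$, $\delta<1/3$. A looser constant, or the absence of the $\delta<1/3$ restriction, would make the residual term $\frac{\varepsilon^2}{6\log(1/\delta)}$ too large and break the final step. In this sense the proof is essentially a verification that the constant is correctly chosen; the balancing choice of $\alpha$ (fixing the divergence term to $\varepsilon/2$) is the device that makes this verification clean, and a slightly different split of $\varepsilon$ would yield the same conclusion with a marginally different admissible constant.
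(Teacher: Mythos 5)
Your proof is correct. Note that the paper itself gives no proof of this statement---it is imported verbatim as Corollary B.8 of \citet{mangold_dp-cd}---so the relevant comparison is with that reference and with the paper's framing sentence that one should ``find the minimum with respect to $\alpha$'' in the conversion lemma. Your argument follows exactly that standard route: apply the RDP-to-$(\varepsilon,\delta)$-DP conversion of \citet{mironov2017renyi} and exhibit an admissible order $\alpha>1$. The only genuine difference is that, instead of the exact minimizer $\alpha = 1+\sqrt{2\sigma^2\log(1/\delta)/\gamma}$ of $g(\alpha)=\frac{\alpha\varepsilon^2}{6\log(1/\delta)}+\frac{\log(1/\delta)}{\alpha-1}$ (which yields the bound $\varepsilon\sqrt{2/3}+\frac{\varepsilon^2}{6\log(1/\delta)}$), you take the balancing choice $\alpha = 1+\frac{2\log(1/\delta)}{\varepsilon}$, giving the slightly weaker but still sufficient bound $\frac{5\varepsilon}{6}+\frac{\varepsilon^2}{6\log(1/\delta)}$; both versions close under $\varepsilon<1<\log 3<\log(1/\delta)$, and your arithmetic (the two terms evaluating to $\frac{\varepsilon}{2}$ and $\frac{\varepsilon}{3}+\frac{\varepsilon^2}{6\log(1/\delta)}$) checks out. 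Two hygiene points you handled correctly and that are worth keeping explicit: the conversion lemma requires $0<\delta<1$, which follows from $\delta<1/3$; and the hypothesis must be read as the RDP guarantee holding for \emph{every} order $\alpha>1$ (as it does for the Gaussian mechanism in \Cref{corollary:rdp}), since the argument needs the guarantee at the particular $\alpha$ you select---you state this interpretation, and it is the intended one.
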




\subsection{Proof of \Cref{thm_privacy}} \label{proof_privacy}

We are now ready to prove \Cref{thm_privacy}.

\begin{restate-theorem}{\ref{thm_privacy}}
  Let $\cS$ be nonvacuous and proper probability distribution over the subsets of $[d]$ (\Cref{ass:nonvacuous_proper}).
  Let $\ell: \R^d \times \cX \rightarrow \R$ be differentiable in its first argument (\Cref{ass:differentiability}), convex (\Cref{ass:convexity}), and $L_{\cS}$-component-Lipschitz with $L_U > 0$ for all $U \in \range{\cS}$ (\Cref{ass:comp_lipschitz}).
  Let $0 < \epsilon \leq 1$, $\delta < 1/3$. 
  If 
  $$
  \sigma_U^2 = \frac{12 L_U^2 K T \log(1/\delta)}{n^2\epsilon^2},
  $$
  for all $U \in \range{\cS}$, then \Cref{algorithm} satisfies $(\epsilon, \delta)$-DP.
\end{restate-theorem}

\begin{proof}
  Let us first recall our \Cref{algorithm}.

\begin{algorithm}[H]
  \begin{algorithmic}[1]
  \STATE \textbf{Input:} Initial point $w^0\in\R^d$, step sizes $\mGamma = \Diag{\gamma_1,\ldots, \gamma_d}$, number of iterations $T$, number of inner loops $K$, probability distribution $\cS$ over the subsets of $[d]$, noise scales $\sigma_U$ for all $U\in \range{\cS}$.
  \FOR{$t = 0,\ldots, T-1$}
  \STATE Set $\theta^0 = w^{t}$
  \FOR{$k = 0,\ldots, K-1$}
  \STATE Sample a subset $S \sim \cS$ and let $\mC = \mC(S)$ (see definition \ref{def_sketch})
  \STATE Draw $\eta \sim \cN\(0, \sigma_{S} \mI\)$
  \STATE $\theta^{k+1} = \theta^k - \mGamma\mC\(\nabla f(\theta^k) + \eta \)$
  \ENDFOR
  \STATE $w^{t+1} = \frac1K \sum_{k=1}^K \theta^k$
  \ENDFOR 
  \end{algorithmic}

  \caption*{\Cref{algorithm} \algname{DP-SkGD}}
\end{algorithm}

  From the privacy perspective, \Cref{algorithm} adaptively releases and post-processes a series of gradient coordinates protected by the Gaussian mechanism. 
  First, we focus on the inner loop of the algorithm, which can be viewed as a composition of $K$ Gaussian mechanisms.
  Let $\sigma > 0$.
   \Cref{corollary:rdp} guarantees that the $k$-th Gaussian mechanism with noise scale $\sigma_U = \Delta_{U}(\nabla f) \sigma > 0$ is $(\alpha, \frac{\alpha}{2 \sigma^2})$-RDP. 
   Then, the composition of these $K$ mechanisms is, according to \Cref{proposition:composition}, $(\alpha, \frac{K \alpha}{2 \sigma^2})$-RDP. 
   This can be converted to $(\epsilon, \delta)$-DP via \Cref{lem:ed_dp} with $\sigma = \frac{\sqrt{3\gamma\log(1/\delta)}}{\varepsilon}$ and $\gamma = K$, which gives $\sigma_U = \frac{\Delta_U(\nabla f) \sqrt{3K \log(1/\delta)}}{\epsilon}$ for $k \in [K]$.
   Since we repeat this inner loop $T$ times we get
   $$
   \sigma_U^2 = \Delta_{U}^2(\nabla f) \frac{3 K T \log(1/\delta)}{\epsilon^2},
   $$
   for all $U \in \range{\cS}$.
   It remains to note that $\Delta_{U}(\nabla f) = \frac{\Delta_{U}(\nabla \ell)}{n}$ and use the bound on sensitive given in \Cref{lem:sensitivity}.
\end{proof}

\section{Proof of Utility Guarantees}
\label{proof_utility}

To prove the theorem we need the following lemma.

\begin{lemma}[Proof in \Cref{proof_utility_lemmas}]
    \label{lemma_convergence}
    	Under assumptions  $f$ is convex and $\mM$-smooth and the selection of stepsize $\mGamma = \mP\mM^{-1}$, the iterates of \Cref{algorithm} satisfy
    \begin{equation*}
    	 \E{f(w^{t+1}) - f(w^{\star})}  \leq  \frac{\E{\|w^t - w^{\star} \|^2_{\mM\mP^{-1}}} + 2\E{f(w^t) - f(w^{\star})}}{2K} + \E{\|\mC \sigma_S \one \|_{\mP \mM^{-1}}^2}.
    \end{equation*}
\end{lemma}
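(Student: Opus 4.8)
The plan is to run the standard stochastic-descent analysis, but measured in the $\norm{\cdot}_{\mM\mP^{-1}}$ geometry dual to the step-size choice $\mGamma = \mP\mM^{-1}$. Fix an outer index $t$ and work entirely inside the inner loop, writing $\theta^0 = w^t$ and $g_k = \nabla f(\theta^k)$. First I would expand the one-step quantity $\norms{\theta^{k+1} - w^\star}_{\mM\mP^{-1}}$ using the update $\theta^{k+1} = \theta^k - \mGamma\mC(g_k + \eta)$, producing the cross term $-2\inp{\mM\mP^{-1}(\theta^k - w^\star)}{\mGamma\mC(g_k+\eta)}$ and the quadratic term $\norms{\mGamma\mC(g_k+\eta)}_{\mM\mP^{-1}}$.

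Next I would take the conditional expectation over the sketch $S \sim \cS$ and the noise $\eta$. Three diagonal-matrix computations drive everything, all exploiting that $\mM,\mP,\mGamma,\mC$ commute. Unbiasedness $\E{\mC} = \mI$ together with $\E{\mC\eta} = 0$ (since $\eta$ is mean zero and drawn after $S$) collapses the cross term: because $\mM\mP^{-1}\mGamma = \mM\mP^{-1}\mP\mM^{-1} = \mI$, it becomes exactly $-2\inp{g_k}{\theta^k - w^\star}$. In the quadratic term the noise--gradient interaction vanishes by $\E{\eta} = 0$, leaving a gradient part and a noise part. Using $\E{\mC \mP\mM^{-1} \mC} = \mP\mM^{-1}\E{\mC^2} = \mP\mM^{-1}\mP^{-1} = \mM^{-1}$, the gradient part evaluates to $\norms{g_k}_{\mM^{-1}}$, while a direct diagonal computation shows the noise part equals the target term $\E{\norms{\mC\sigma_S\one}_{\mP\mM^{-1}}}$ (both reduce to $\E{\sigma_S^2 \sum_j \delta_j/(p_j M_j)}$).

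The load-bearing analytic step is to combine convexity and $\mM$-smoothness so that the gradient-variance term $\norms{g_k}_{\mM^{-1}}$ does not merely cancel the progress term. Here I would invoke co-coercivity of $\mM$-smooth convex $f$ in the $\mM^{-1}$-norm, namely $\norms{g_k}_{\mM^{-1}} \le \inp{g_k}{\theta^k - w^\star}$ (the Baillon--Haddad bound, obtained by minimizing the smoothness upper model of $f(\cdot) - \inp{\nabla f(w^\star)}{\cdot}$), rather than the cruder pair $\norms{g_k}_{\mM^{-1}} \le 2(f(\theta^k) - f^\star)$ and $f(\theta^k)-f^\star \le \inp{g_k}{\theta^k-w^\star}$, which cancel exactly and leave no descent. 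Co-coercivity together with convexity yields the per-step inequality $\E{\norms{\theta^{k+1}-w^\star}_{\mM\mP^{-1}}} \le \norms{\theta^k - w^\star}_{\mM\mP^{-1}} - (f(\theta^k) - f^\star) + \E{\norms{\mC\sigma_S\one}_{\mP\mM^{-1}}}$.

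Finally I would take total expectation, telescope these inequalities across the inner loop, and apply Jensen (convexity of $f$) to the averaged output $w^{t+1} = \frac1K\sum_{k=1}^K\theta^k$ to pass from the per-iterate suboptimalities to $\E{f(w^{t+1}) - f^\star}$, dropping the nonpositive endpoint distance $-\E{\norms{\theta^K - w^\star}_{\mM\mP^{-1}}}$. The telescoping of the Lyapunov distance over the $K$ inner steps is what produces the $\tfrac{1}{2K}$ factor on $\E{\norms{w^t - w^\star}_{\mM\mP^{-1}}}$, while the extra $2\E{f(w^t) - f^\star}$ accounts for the boundary of the averaging window, i.e.\ the mismatch between the iterates $\theta^1,\ldots,\theta^K$ that are averaged and the iterates $\theta^0,\ldots,\theta^{K-1}$ at which the gradients (hence the descent inequalities) are anchored. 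I expect this last bookkeeping, reconciling the averaging window with the telescoped indices to land on the exact constants, to be the main obstacle; the expectation computations are mechanical once commutativity of the diagonal matrices is used, and the co-coercivity bound is the one genuinely essential inequality.
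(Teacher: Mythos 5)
Your per-step analysis is correct and is essentially the paper's: the expansion of $\norms{\theta^{k+1}-w^{\star}}_{\mM\mP^{-1}}$, the sketch moment identities ($\E{\mC}=\mI$, $\E{\mC^2}=\mP^{-1}$, the noise term collapsing to $\E{\norms{\mC\sigma_S\one}_{\mP\mM^{-1}}}$), and the co-coercivity bound $\norms{\nabla f(\theta^k)}_{\mM^{-1}} \le \inp{\nabla f(\theta^k)}{\theta^k-w^{\star}}$ all appear in the paper's Lemmas \ref{lemma_gradient_estimator} and \ref{lemma_l2_distance}. The gap is in the endgame, exactly where you predicted ``the main obstacle'' would be --- and your outline does not resolve it. Your telescoped inequalities bound $\sum_{k=0}^{K-1}\E{f(\theta^k)-f^{\star}}$, whereas Jensen applied to $w^{t+1}=\frac{1}{K}\sum_{k=1}^{K}\theta^k$ requires $\sum_{k=1}^{K}\E{f(\theta^k)-f^{\star}}$. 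The difference involves $\E{f(\theta^K)-f^{\star}}$, and no inequality in your scheme ever places $f(\theta^K)$ (or any $f(\theta^{k+1})$) on a left-hand side, so this quantity is simply never controlled. Your assertion that the $2\E{f(w^t)-f^{\star}}$ term ``accounts for the boundary of the averaging window'' is not backed by any step you describe: with only distance-recursion-plus-convexity inequalities, there is no way to trade $\E{f(\theta^K)}$ for $\E{f(\theta^0)}=\E{f(w^t)}$. Worse, you explicitly drop the endpoint term $-\E{\norms{\theta^K-w^{\star}}_{\mM\mP^{-1}}}$, which is the only remaining handle on $\theta^K$.

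The paper closes this gap with an ingredient your plan omits: a separate smoothness-based descent lemma, which uses $\mM$-component-smoothness on the update itself to bound $\E{f(\theta^{k+1})\mid\theta^k}$ in terms of $f(\theta^k)$, the gradient, and the noise. Combining that descent lemma with the distance recursion (with splitting parameter $\alpha=\nicefrac{1}{2}$) yields per-step inequalities of the form $\E{f(\theta^{k+1})-f^{\star}}-\frac{1}{2}\E{f(\theta^k)-f^{\star}} \le \frac{1}{2}\left(\E{D_k}-\E{D_{k+1}}\right)+\E{\norms{\mC\sigma_S\one}_{\mP\mM^{-1}}}$, whose left-hand sides, once summed, produce precisely $\sum_{k=1}^{K}$ (the averaged window) together with the boundary term $-\E{f(w^t)-f^{\star}}$; this is where both the $\nicefrac{1}{2K}$ factor and the $2\E{f(w^t)-f^{\star}}$ term actually come from. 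Alternatively, your argument could be repaired without the descent lemma by \emph{not} dropping $-\E{D_K}$ and invoking $\mM$-smoothness at the optimum, $\E{f(\theta^K)-f^{\star}} \le \frac{1}{2}\E{\norms{\theta^K-w^{\star}}_{\mM}} \le \frac{1}{2}\E{D_K}$, to absorb the endpoint; this gives a bound of the same order (with slightly different constants). Either way, an extra use of smoothness beyond co-coercivity is genuinely required, and as written your proof cannot be completed.
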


\subsection{Convex Case}
\begin{restate-theorem}{\ref{thm_utility}}[Convex case]\label{thm_convex} 
  Let $\cS$ be a nonvacuous and proper probability distribution over the subsets of $[d]$ (\Cref{ass:nonvacuous_proper}).
  Define $\mP = \Diag{p_1, \dots, p_d}$, where $p_i = \Pr(i \in S)$ denotes the probability that the $i$-th component is included in the random set $S \sim \cS$.
  Let $\ell: \R^d \times \cX \rightarrow \R$ be differentiable in its first argument (\Cref{ass:differentiability}), convex (\Cref{ass:convexity}), and $L_{\cS}$-component-Lipschitz with $L_U > 0$ for all $U \in \range{\cS}$ (\Cref{ass:comp_lipschitz}).
  Further, let $f$ be $\mM$-component-smooth (\Cref{ass:comp_smoothness}), and let $w^{\star}$ denote a minimizer of $f$, with $f^{\star} = f(w^{\star})$ representing the minimum value.
  Consider $w_{\text{priv}} \in \R^d$ as the output of \Cref{algorithm} with step sizes $\mGamma = \mP\mM^{-1}$.
  We assume a privacy budget with $\epsilon \leq 1$ and $\delta < 1/3$.
  The noise scales $\sigma_U$ are set as in \Cref{thm_privacy} to ensure $(\epsilon,\delta)$-DP, with
  $$
  T = 1 \quad\text{and}\quad K = \frac{n\varepsilon R_{\mM\mP^{-1}}}{\Sigma_S \sqrt{\log(1/\delta)}},
  $$
  where 
  $$
  \Sigma_S^2 = \E{\|\mC L_S \one \|_{\mP \mM^{-1}}^2}.
  $$
  Then, the utility of \Cref{algorithm} is given by 
    \begin{equation*}
        \E{f(w^{\text{priv}}) - f^{\star}}  = \cO \(\frac{\Sigma_S R_{\mM\mP^{-1}}}{n\varepsilon}\log^{\nicefrac{1}{2}} \frac{1}{\delta}\),
    \end{equation*}
  where $R^2_{\mM\mP^{-1}} = \norms{w^0 - w^{\star}}_{\mM\mP^{-1}}$.
\end{restate-theorem}

\begin{proof}
  Using $\mM$-smoothness of $f$ we get
  \begin{equation*}
  f(w^t) \leq f(w^{\star}) + \left\langle \nabla f(w^{\star}), w^t - w^{\star} \right\rangle + \frac{1}{2}\left\| w^t - w^{\star} \right\|_{\mM}^2 \leq f(w^{\star}) + \frac{1}{2}\left\| w^t - w^{\star} \right\|_{\mM\mP^{-1}}^2,
  \end{equation*}
  and the result of Lemma \ref{lemma_convergence} further simplifies as:
  \begin{equation*}
  \mathbb{E}\left[f(w^{t+1}) - f(w^{\star})\right] \leq \frac{1}{K} \left\| w^t - w^{\star} \right\|_{\mM\mP^{-1}}^2 + \E{\|\mC \sigma_S \one \|_{\mP \mM^{-1}}^2}.
  \end{equation*}

  Put $T=1$ and let $R^2_{\mM} = \|w^0 - w^{\star} \|^2_{\mM}$. 
  From Theorem \ref{thm_privacy} we have $\sigma_U^2 = \frac{12 L_U^2 K T \log(1/\delta)}{n^2\epsilon^2}$. 
  Putting this in Lemma \ref{lemma_convergence} we get 
\begin{align*}
  \E{f(w^{\text{priv}}) - f(w^{\star})} 
  & \le \frac{R^2_{\mM\mP^{-1}}}{K} + \E{\|\mC \sigma_S \one \|_{\mP \mM^{-1}}^2} \\
  & = \frac{R^2_{\mM\mP^{-1}}}{K} + \frac{12 K \log(1/\delta)}{n^2\epsilon^2} \E{\|\mC L_S \one \|_{\mP \mM^{-1}}^2} \\
  & = \frac{R^2_{\mM\mP^{-1}}}{K} + \frac{12 K \log(1/\delta)}{n^2\epsilon^2} \Sigma_S^2. 
\end{align*}
To minimize the right-hand side we put $K = \frac{n\varepsilon R_{\mM\mP^{-1}}}{\Sigma_S \sqrt{\log(1/\delta)}}$, and we get
\begin{align*}
  \E{f(w^{\text{priv}}) - f(w^{\star})} 
  & \le \frac{R_{\mM\mP^{-1}} \Sigma_S \sqrt{\log(1/\delta)}}{n\varepsilon} + \frac{12R_{\mM\mP^{-1}} \Sigma_S \sqrt{\log(1/\delta)}}{n\epsilon}\\
  & = \cO \(\frac{\Sigma_S  R_{\mM\mP^{-1}} }{n\varepsilon}\log^{\nicefrac{1}{2}} \frac{1}{\delta}\).
\end{align*}
\end{proof}

\subsection{Strongly Convex Case}

\begin{restate-theorem}{\ref{thm_utility}}[Strongly Convex Case]\label{thm_strongly_convex}
  Let $\cS$ be a nonvacuous and proper probability distribution over the subsets of $[d]$ (\Cref{ass:nonvacuous_proper}).
  Define $\mP = \Diag{p_1, \dots, p_d}$, where $p_i = \Pr(i \in S)$ denotes the probability that the $i$-th component is included in the random set $S \sim \cS$.
  Let $\ell: \R^d \times \cX \rightarrow \R$ be differentiable in its first argument (\Cref{ass:differentiability}), convex (\Cref{ass:convexity}), and $L_{\cS}$-component-Lipschitz with $L_U > 0$ for all $U \in \range{\cS}$ (\Cref{ass:comp_lipschitz}).
  Further, let $f$ be $\mu$-strongly convex ($\mu=\mu_{\mI}$, \Cref{ass:strong_convexity}), $\mM$-component-smooth (\Cref{ass:comp_smoothness}), and let $w^{\star}$ denote a minimizer of $f$, with $f^{\star} = f(w^{\star})$ representing the minimum value.
  Consider $w_{\text{priv}} \in \R^d$ as the output of \Cref{algorithm} with step sizes $\mGamma = \mP\mM^{-1}$.
  We assume a privacy budget with $\epsilon \leq 1$ and $\delta < 1/3$.
  The noise scales $\sigma_U$ are set as in \Cref{thm_privacy} to ensure $(\epsilon,\delta)$-DP, with
  $$
  K = 2\(1+\frac{1}{\mu}\max_{i\in[d]}\left\{\frac{M_i}{p_i}\right\}\),
  $$
  $$
  T = \log_2\left(\frac{(f(w^0) - f^{\star}) n^2 \epsilon^2}{(1+\frac{1}{\mu}\max_{i\in[d]}\left\{\frac{M_i}{p_i}\right\}) \Sigma_S^2 \log(1/\delta)}\right),
  $$
  where 
  $$
  \Sigma_S^2 = \E{\|\mC L_S \one \|_{\mP \mM^{-1}}^2}.
  $$
  Then, the utility of \Cref{algorithm} is given by 
  \begin{align*}
      &\E{f(w^T) - f^{\star}}\\
    & = O\left(
      \frac{1}{\mu}\max_{i\in[d]}\left\{\frac{M_i}{p_i}\right\} \Sigma_S^2 \frac{\log(1/\delta)}{n^2 \epsilon^2}
\log_2\left(\frac{(f(w^0) - f(w^{\star})) n \epsilon}{\frac{1}{\mu}\max_{i\in[d]}\left\{\frac{M_i}{p_i}\right\} \Sigma_S \log(1/\delta)}\right) \right) \\
& = \widetilde \cO \left(\frac{1}{\mu}\max_{i\in[d]}\left\{\frac{M_i}{p_i}\right\} \Sigma_S^2 \frac{\log(1/\delta)}{n^2 \epsilon^2}\right).
  \end{align*}
\end{restate-theorem}
    
\begin{proof}
As $f$ is $\mu$-strongly-convex,
we obtain for any $w \in \R^d$, that $f(w) \ge f(w^{\star}) + \frac{\mu}{2} \norms{w - w^{\star}}$.
Therefore, $\norms{w^t - w^{\star}} \le \frac{2}{\mu}\(f(w^t) - f(w^{\star})\)$
and \Cref{lemma_convergence} gives, for $1 \le t \le T-1$,
\begin{align*}
    \E{f(w^{t+1}) - f(w^{\star})}  
    &\le \frac{\|w^t - w^{\star} \|^2_{\mM\mP^{-1}} + 2\(f(w^t) - f(w^{\star}\))}{2K} + \E{\|\mC \sigma_S \one\|_{\mP \mM^{-1}}^2} \\
    &\le \frac{\max_{i\in[d]}\left\{\frac{M_i}{p_i}\right\}\norms{w^t - w^{\star}} + 2\(f(w^t) - f(w^{\star}\))}{2K} + \E{\|\mC \sigma_S \one\|_{\mP \mM^{-1}}^2} \\
    &\le \frac{\(1+\frac{1}{\mu}\max_{i\in[d]}\left\{\frac{M_i}{p_i}\right\}\)\(f(w^t) - f(w^{\star}\))}{K} + \E{\|\mC \sigma_S \one\|_{\mP \mM^{-1}}^2}.
\end{align*}
Setting $K = 2\(1+\frac{1}{\mu}\max_{i\in[d]}\left\{\frac{M_i}{p_i}\right\}\)$ we obtain
\begin{eqnarray*}
    \E{f(w^{t+1}) - f(w^{\star})}  
    &\le&  \frac{f(w^t) - f(w^{\star})}{2} + \E{\|\mC \sigma_S \one \|_{\mP \mM^{-1}}^2}.
\end{eqnarray*}
Recursive application of this inequality gives

\begin{align*}
\E{f(w^T) - f(w^{\star})}
    & \le \frac{f(w^0) - f(w^{\star})}{2^T} + \sum_{t=0}^{T-1} \frac{1}{2^t} \E{\|\mC \sigma_S \one \|_{\mP \mM^{-1}}^2}
\le \frac{f(w^0) - f(w^{\star})}{2^T} + 2 \E{\|\mC \sigma_S \one \|_{\mP \mM^{-1}}^2} .
\end{align*}
From Theorem \ref{thm_privacy} we have $\sigma_U^2 = \frac{12 L_U^2 K T \log(1/\delta)}{n^2\epsilon^2}$. 
Thus
\begin{align*}
  \E{\|\mC \sigma_S \one \|_{\mP \mM^{-1}}^2} 
  & =  \frac{12 K T \log(1/\delta)}{n^2\epsilon^2} \E{\|\mC L_S \one \|_{\mP \mM^{-1}}^2} \\
  & =  \frac{12 K T \log(1/\delta)}{n^2\epsilon^2} \Sigma_S^2. 
\end{align*}
Taking 
$$
T = \log_2\left(\frac{(f(w^0) - f(w^{\star})) n^2 \epsilon^2}{\(1+\frac{1}{\mu}\max_{i\in[d]}\left\{\frac{M_i}{p_i}\right\}\) \Sigma_S^2 \log(1/\delta)}\right)
$$
gives
\begin{align*}
&\E{f(w^T) - f(w^{\star})} \\
    & \le \left(1 + \log_2\left(\frac{(f(w^0) - f(w^{\star})) n^2 \epsilon^2}{24\(1+\frac{1}{\mu}\max_{i\in[d]}\left\{\frac{M_i}{p_i}\right\}\)\Sigma_S^2 \log(1/\delta)}\right)\right)
\frac{24\(1+\frac{1}{\mu}\max_{i\in[d]}\left\{\frac{M_i}{p_i}\right\}\)\Sigma_S^2 \log(1/\delta)}{n^2 \epsilon^2}\\
    & = O\left(
      \frac{1}{\mu}\max_{i\in[d]}\left\{\frac{M_i}{p_i}\right\} \Sigma_S^2 \frac{\log(1/\delta)}{n^2 \epsilon^2}
\log_2\left(\frac{(f(w^0) - f(w^{\star})) n \epsilon}{\frac{1}{\mu}\max_{i\in[d]}\left\{\frac{M_i}{p_i}\right\} \Sigma_S \log(1/\delta)}\right) \right) \\
& = \widetilde{O}\left(\frac{1}{\mu}\max_{i\in[d]}\left\{\frac{M_i}{p_i}\right\} \Sigma_S^2 \frac{\log(1/\delta)}{n^2 \epsilon^2}\right),
\end{align*}
which is the result of our theorem.
\end{proof}

\subsection{Proof of Auxiliary Lemmas}
\label{proof_utility_lemmas}

\begin{lemma}
    \label{lemma_gradient_estimator}
Let $\mC$ be an unbiased diagonal sketch defined in \eqref{sketch-matrix-C}, and let $\eta \sim \cN(0,\sigma \mI)$ and let $\mD$ be any diagonal matrix.
Then 
$$
\E{\mC\(x + \eta\)} = x,
$$
and 
$$
\E{\|\mC\(x + \eta\)\|_{\mD}^2} = \| x \|_{\mP^{-1}\mD}^2 + \E{\|\mC \sigma_S \one \|_{\mD}^2},
$$
for any deterministic diagonal matrix $D$.
\end{lemma}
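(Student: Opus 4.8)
The plan is to exploit the fact that $\mC$, $\mD$, and $\mP$ are all diagonal—so everything decouples coordinate-wise—and to separate the randomness of the sketch $S$ from that of the Gaussian noise $\eta$ by conditioning on $S$ (the tower rule). Throughout I would write $\mC = \mI_S \mP^{-1}$ as in \Cref{def_sketch}, so that $(\mC v)_j = v_j/p_j$ when $j \in S$ and $(\mC v)_j = 0$ otherwise, and I would use $\Pr(j \in S) = p_j$ repeatedly.

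For the first identity, I would split $\mC(x+\eta) = \mC x + \mC \eta$. The term $\E{\mC x} = x$ is exactly the unbiasedness recorded after \Cref{def_sketch}. For the noise term, conditioning on $S$ fixes $\mC = \mC(S)$ and leaves $\eta$ centered, so $\ExpCond{\mC \eta}{S} = \mC(S)\,\ExpCond{\eta}{S} = 0$; the tower rule then gives $\E{\mC\eta} = 0$, and hence $\E{\mC(x+\eta)} = x$.

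For the second moment, I would expand the squared $\mD$-norm into three pieces using that $\mC$ and $\mD$ are diagonal (so $\mC \mD \mC = \mD \mC^2$): $\norms{\mC(x+\eta)}_\mD = \norms{\mC x}_\mD + 2\scalar{\mD \mC x}{\mC \eta} + \norms{\mC\eta}_\mD$. Taking $\E{\cdot}$ and conditioning on $S$ term by term: the cross term vanishes since $\ExpCond{\eta}{S}=0$; the first term gives $\E{\norms{\mC x}_\mD} = \E{\sum_j \mathbbm{1}[j\in S]\, D_j x_j^2/p_j^2} = \sum_j D_j x_j^2/p_j = \norms{x}_{\mP^{-1}\mD}$ via $\Pr(j\in S)=p_j$; and the noise term, using $\ExpCond{\eta_j^2}{S} = \sigma_S^2$ (the square of the scale appearing in $\mC\sigma_S\one$), yields $\ExpCond{\norms{\mC\eta}_\mD}{S} = \sum_{j\in S} D_j \sigma_S^2/p_j^2 = \norms{\mC\sigma_S\one}_\mD$, whose outer expectation is precisely $\E{\norms{\mC\sigma_S\one}_\mD}$. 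Summing the three contributions gives the claimed identity.

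The computation is elementary; the only point requiring care is that the sketch $\mC$ and the noise scale $\sigma_S$ are driven by the \emph{same} random set $S$, so $\eta$ is not independent of $\mC$. This is exactly why the correct tool is conditioning on $S$ rather than factoring $\mC$ and $\eta$ as independent: once $S$ is fixed, $\mC$ is deterministic and $\eta$ is a centered Gaussian with its $S$-dependent scale, after which each surviving term collapses to a coordinate-wise sum evaluated through $\Pr(j\in S)=p_j$.
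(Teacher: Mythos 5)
Your proof is correct and follows essentially the same route as the paper's: split $\mC(x+\eta)$ into the deterministic and noise parts, compute $\E{\mathrm{c}_j^2} = \nicefrac{1}{p_j}$ coordinate-wise to get $\norms{x}_{\mP^{-1}\mD}$, and condition on $S$ to kill the cross term and evaluate the noise term as $\E{\norms{\mC \sigma_S \one}_{\mD}}$. If anything, your explicit use of the tower rule is slightly more careful than the paper's own first-moment step, which writes $\E{\mC\eta}=\E{\mC}\E{\eta}$ as if $\eta$ were independent of $\mC$ --- a factorization that, as you correctly observe, is not literally valid since the noise scale $\sigma_S$ is driven by the same random set $S$ as the sketch, although the conclusion is unaffected because the conditional mean of $\eta$ given $S$ is zero.
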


\begin{proof} We have 
    \begin{eqnarray}
      \E{\mC(x+\eta)} = \E{\mC x} + \E{\mC \eta} = \E{\mC}x + \E{\mC}\E{\eta} = x. \notag
    \end{eqnarray}
    Now we calculate the variance:
    \begin{eqnarray}
      \E{\|\mC(x+\eta)\|_{\mD}^2} &=& \E{\|\mC x+ \mC \eta\|_{\mD}^2} \notag\\
      &=&\E{\|\mC x\|_{\mD}^2} + \E{2\<\mD\mC x,\mC \eta \>} + \E{\|\mC \eta\|_{\mD}^2}. \notag
    \end{eqnarray}
  Further, 
  \begin{eqnarray}
    \E{\|\mC x\|_{\mD}^2} = \E{x^\top \mC \mD \mC x} = \E{\sum_{j=1}^d x_j^2c_j^2D_j} = \sum_{j=1}^d x_j^2\frac{1}{p_j}D_j = \| x \|_{\mP^{-1}\mD}^2, \notag
  \end{eqnarray}  
  similarly,
  \begin{eqnarray}
    \E{\|\mC \eta\|_{\mD}^2} = \E{\E{\|\mC \eta\|_{\mD}^2| S}} = \E{\|\mC \sigma_S \one \|_{\mD}^2}. \notag
  \end{eqnarray}  
  It remains to note that 
  $$
  \E{2\<\mD\mC x,\mC \eta \>} = 0.
  $$
  \end{proof}

\begin{lemma}[Descent Lemma]
For any $0<\alpha<1$ and $w \in \R^d$ the iterates of \Cref{algorithm} satisfy
\begin{align*}
    &\E{f(\theta^{k+1}) - f(w) \big| \theta^k} - (1-\alpha)\(f(\theta^k) - f(w)\)\\
    &\quad\le \alpha\(f(\theta^k) - f(w)\) - \|\nabla f(\theta^k)\|^2_{\mathbf{\Gamma}} + \frac{1}{2}\( \| \nabla f(\theta^k) \|_{\mP^{-1}\mGamma^2\mM}^2 + \E{\|\mC \sigma_S \one \|_{\mGamma^2\mM}^2} \).
\end{align*}
\end{lemma}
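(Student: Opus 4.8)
The plan is to start from the component-smoothness inequality (\Cref{ass:comp_smoothness}) applied to the consecutive iterates $\theta^{k+1},\theta^{k}$ of the inner loop, and then take the conditional expectation given $\theta^{k}$ over the two independent sources of randomness, namely the sampled sketch $\mC = \mC(S)$ and the Gaussian noise $\eta$. Writing $g \eqdef \nabla f(\theta^k)$ and substituting the update $\theta^{k+1} - \theta^k = -\mGamma\mC(g+\eta)$, smoothness gives
\begin{equation*}
f(\theta^{k+1}) \le f(\theta^k) - \langle g, \mGamma\mC(g+\eta)\rangle + \frac12\norms{\mGamma\mC(g+\eta)}_{\mM}.
\end{equation*}
Everything then reduces to evaluating the conditional expectations of the linear and quadratic terms, for which I would invoke \Cref{lemma_gradient_estimator}.

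For the linear term, unbiasedness of the sketch together with independence and zero mean of $\eta$ gives $\E{\mC(g+\eta)\mid\theta^k} = g$ (the $x=g$ case of \Cref{lemma_gradient_estimator}); since $\mGamma$ is deterministic and diagonal, this yields $\E{\langle g, \mGamma\mC(g+\eta)\rangle\mid\theta^k} = \langle g, \mGamma g\rangle = \norms{g}_{\mGamma}$. For the quadratic term, the key algebraic step is to absorb the deterministic $\mGamma$ into the weighting matrix: because $\mGamma$, $\mC$, and $\mM$ are all diagonal and hence commute, $\norms{\mGamma\mC(g+\eta)}_{\mM} = \norms{\mC(g+\eta)}_{\mGamma^2\mM}$. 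I can then apply the variance identity of \Cref{lemma_gradient_estimator} with the deterministic diagonal matrix $\mD = \mGamma^2\mM$ to get
\begin{equation*}
\E{\norms{\mGamma\mC(g+\eta)}_{\mM}\mid\theta^k} = \norms{g}_{\mP^{-1}\mGamma^2\mM} + \E{\norms{\mC\sigma_S\one}_{\mGamma^2\mM}}.
\end{equation*}

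Combining the two computations produces the one-step bound
\begin{equation*}
\E{f(\theta^{k+1})\mid\theta^k} \le f(\theta^k) - \norms{g}_{\mGamma} + \frac12\left(\norms{g}_{\mP^{-1}\mGamma^2\mM} + \E{\norms{\mC\sigma_S\one}_{\mGamma^2\mM}}\right).
\end{equation*}
Subtracting $f(w)$ from both sides and then adding and subtracting $(1-\alpha)\bigl(f(\theta^k)-f(w)\bigr)$ regroups the coefficient of $f(\theta^k)-f(w)$ into the stated $\alpha$-form. This final step is purely algebraic and holds for every $\alpha\in(0,1)$ and every $w\in\R^d$, so convexity is not actually required for this lemma; it will enter only later, when this descent inequality is combined with a separate first-order bound to derive \Cref{lemma_convergence}. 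The only genuinely delicate point I anticipate is the diagonal-commutativity rewrite of the $\mM$-norm into the $\mGamma^2\mM$-norm, which is what lets \Cref{lemma_gradient_estimator} apply verbatim and separates the gradient term from the noise term; everything else is routine bookkeeping of the conditional expectation.
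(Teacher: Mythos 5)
Your proposal is correct and follows essentially the same route as the paper's own proof: apply $\mM$-component smoothness to the update $\theta^{k+1}=\theta^k-\mGamma\mC(\nabla f(\theta^k)+\eta)$, handle the linear term via unbiasedness and the quadratic term via the diagonal rewrite $\norms{\mGamma\mC(\cdot)}_{\mM}=\norms{\mC(\cdot)}_{\mGamma^2\mM}$ followed by \Cref{lemma_gradient_estimator} with $\mD=\mGamma^2\mM$, and finish by splitting $f(\theta^k)-f(w)$ into its $\alpha$ and $(1-\alpha)$ parts. Your side remark that convexity is not needed here is also consistent with the paper, whose proof of this lemma invokes only smoothness and the sketch identities.
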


\begin{proof}
Using the $M$-component smoothness of $f$, and lemma \ref{lemma_gradient_estimator} we get:

\begin{align*}
  \E{f(\theta^{k+1}) \big| \theta^k}
  &=  \E{f\(\theta^k - \mathbf{\Gamma}\mC\(\nabla f(\theta^k) + \eta \)\)\big|\theta^k} \\
  &\le f(\theta^k) - \<\nabla f(\theta^k), \mathbf{\Gamma}\E{\mC\(\nabla f(\theta^k) + \eta \)\big|\theta^k}\>\\
  &\quad + \frac{1}{2}\E{\|\mathbf{\Gamma}\mC\(\nabla f(\theta^k) + \eta \)\|_{\mM}^2\big|\theta^k} \\
  &= f(\theta^k) - \|\nabla f(\theta^k)\|^2_{\mathbf{\Gamma}} + \frac{1}{2}\E{\|\mC\(\nabla f(\theta^k) + \eta \)\|_{\mGamma^2\mM}^2\big|\theta^k} \\
  &= f(\theta^k) - \|\nabla f(\theta^k)\|^2_{\mathbf{\Gamma}} + \frac{1}{2}\( \| \nabla f(\theta^k) \|_{\mP^{-1}\mGamma^2\mM}^2 + \E{\|\mC \sigma_S \one \|_{\mGamma^2\mM}^2} \).
\end{align*}

For any vector $w$
\begin{align*}
\E{f(\theta^{k+1}) - f(w) \big| \theta^k} \le f(\theta^k) &- f(w) - \|\nabla f(\theta^k)\|^2_{\mathbf{\Gamma}} \\
& + \frac{1}{2}\( \| \nabla f(\theta^k) \|_{\mP^{-1}\mGamma^2\mM}^2 + \E{\|\mC \sigma_S \one \|_{\mGamma^2\mM}^2} \).
\end{align*}

It remains to split $f(\theta^k) - f(w)$ into two parts.

\end{proof}

\begin{lemma}\label{lemma_l2_distance}
    Under assumptions $f$ is convex and $\mM$-smooth, for any $0<\alpha < 1$ the iterates of \Cref{algorithm} satisfy
\end{lemma}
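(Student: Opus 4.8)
The plan is to track the squared distance to the minimizer measured in the weighted norm $\norms{\cdot}_{\mM\mP^{-1}}$, since this is precisely the Lyapunov quantity that telescopes into \Cref{lemma_convergence}. I would begin from the inner update $\theta^{k+1} = \theta^k - \mGamma\mC(\nabla f(\theta^k) + \eta)$ and expand the square,
$$
\norms{\theta^{k+1} - w^{\star}}_{\mM\mP^{-1}} = \norms{\theta^{k} - w^{\star}}_{\mM\mP^{-1}} - 2\scalar{\mM\mP^{-1}(\theta^k - w^{\star})}{\mGamma\mC(\nabla f(\theta^k)+\eta)} + \norms{\mGamma\mC(\nabla f(\theta^k)+\eta)}_{\mM\mP^{-1}},
$$
and then take the expectation conditioned on $\theta^k$. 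Both the cross term and the quadratic term are handled by \Cref{lemma_gradient_estimator}: unbiasedness $\E{\mC(x+\eta)}=x$ removes the randomness from the cross term, while the variance identity splits the quadratic term into a deterministic gradient piece and the injected-noise piece.

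The next step is the matrix bookkeeping, where I expect the choice $\mGamma = \mP\mM^{-1}$ to do all the work. For the cross term, $\mM\mP^{-1}\mGamma = \mI$, so after taking expectations it collapses to $-2\scalar{\nabla f(\theta^k)}{\theta^k - w^{\star}}$. For the quadratic term, pulling $\mGamma$ into the norm gives the weight $\mGamma^2\mM\mP^{-1} = \mP\mM^{-1}$, so \Cref{lemma_gradient_estimator} applied with $\mD = \mP\mM^{-1}$ (noting $\mP^{-1}\mD = \mM^{-1}$) yields exactly $\norms{\nabla f(\theta^k)}_{\mM^{-1}} + \E{\norms{\mC\sigma_S\one}_{\mP\mM^{-1}}}$. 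Collecting these produces
$$
\E{\norms{\theta^{k+1} - w^{\star}}_{\mM\mP^{-1}} \,\middle|\, \theta^k} = \norms{\theta^{k} - w^{\star}}_{\mM\mP^{-1}} - 2\scalar{\nabla f(\theta^k)}{\theta^k - w^{\star}} + \norms{\nabla f(\theta^k)}_{\mM^{-1}} + \E{\norms{\mC\sigma_S\one}_{\mP\mM^{-1}}},
$$
where the last summand is already the noise term appearing in \Cref{lemma_convergence}.

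The final and most delicate step is to dispose of the stray gradient-norm term $\norms{\nabla f(\theta^k)}_{\mM^{-1}}$ created by the sketch variance. Pure convexity only gives $\scalar{\nabla f(\theta^k)}{\theta^k - w^{\star}} \ge f(\theta^k) - f^{\star}$, which is insufficient. Instead I would invoke the three-point inequality for convex $\mM$-smooth functions, $f(w^{\star}) \ge f(\theta^k) + \scalar{\nabla f(\theta^k)}{w^{\star}-\theta^k} + \tfrac{1}{2}\norms{\nabla f(\theta^k) - \nabla f(w^{\star})}_{\mM^{-1}}$, together with $\nabla f(w^{\star}) = 0$, to obtain $\scalar{\nabla f(\theta^k)}{\theta^k - w^{\star}} \ge f(\theta^k) - f^{\star} + \tfrac{1}{2}\norms{\nabla f(\theta^k)}_{\mM^{-1}}$. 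The free parameter $\alpha \in (0,1)$ enters as the interpolation weight between this smoothness bound and the plain convexity bound; choosing the combination that matches the coefficient of $\norms{\nabla f(\theta^k)}_{\mM^{-1}}$ makes the two gradient terms cancel, leaving the clean recursion $\E{\norms{\theta^{k+1} - w^{\star}}_{\mM\mP^{-1}} \mid \theta^k} \le \norms{\theta^{k} - w^{\star}}_{\mM\mP^{-1}} - 2(f(\theta^k) - f^{\star}) + \E{\norms{\mC\sigma_S\one}_{\mP\mM^{-1}}}$. The main obstacle is exactly this cancellation: one must verify that the $\tfrac{1}{2}$-weighted gradient term produced by the variance of the sketch is no larger than what the smoothness inequality supplies, so that the recursion genuinely contracts rather than merely staying flat.
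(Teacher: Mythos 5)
Your proposal follows the same backbone as the paper's proof: expand the weighted squared distance generated by the update $\theta^{k+1}=\theta^k-\mGamma\mC(\nabla f(\theta^k)+\eta)$, take conditional expectation, and invoke \Cref{lemma_gradient_estimator} to split the quadratic term into the deterministic piece $\norms{\nabla f(\theta^k)}_{\mM^{-1}}$ and the noise piece $\E{\norms{\mC \sigma_S \one}_{\mP\mM^{-1}}}$. Your intermediate identity is exactly the paper's third display after substituting $\mGamma=\mP\mM^{-1}$; note $\mGamma^{-1}=\mM\mP^{-1}$, so your Lyapunov norm and the paper's coincide under that choice. One mild difference: the paper states and proves the lemma for an arbitrary diagonal step size $\mGamma$, which is why its bound carries the residual term $\norms{\nabla f(\theta^k)}_{\mM^{-1}-\mP^{-1}\mGamma}$; by fixing $\mGamma=\mP\mM^{-1}$ at the outset you only obtain the special case in which this residual vanishes. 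That is harmless, since this is the only case invoked later.

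The substantive deviation is your finishing move, and it changes what is actually proven. The paper absorbs the stray gradient norm via cocoercivity, $\scalar{\nabla f(\theta^k)}{\theta^k-w^{\star}}\ge\norms{\nabla f(\theta^k)}_{\mM^{-1}}$, writing $-2\scalar{\nabla f(\theta^k)}{\theta^k-w^{\star}}\le-\scalar{\nabla f(\theta^k)}{\theta^k-w^{\star}}-\norms{\nabla f(\theta^k)}_{\mM^{-1}}$, which after rearranging yields precisely the claimed upper bound on the inner product $\scalar{\theta^k-w^{\star}}{\nabla f(\theta^k)}$. You instead apply the three-point inequality to both copies of the inner product and land on a bound for $2(f(\theta^k)-f^{\star})$. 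This is a different statement: it does not imply the lemma's inner-product bound (convexity gives the implication only in the opposite direction), so as written your argument does not literally prove the lemma. It does prove exactly what the lemma is used for --- the subsequent lemma immediately converts the inner-product bound into a function-value bound via $f(\theta^k)-f^{\star}\le\scalar{\nabla f(\theta^k)}{\theta^k-w^{\star}}$, and your recursion delivers that bound directly, even with a factor of $2$ to spare. Closing the gap to the literal statement takes one line: combine your three-point bound with the standard smoothness consequence $f(\theta^k)-f^{\star}\ge\tfrac{1}{2}\norms{\nabla f(\theta^k)}_{\mM^{-1}}$ to obtain cocoercivity, then rearrange your identity. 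Finally, the interpolation role you assign to $\alpha$ is your own invention: $\alpha$ plays no part in this lemma (it is a vestigial quantifier here, becoming active only in the descent and combination lemmas), and your cancellation in fact requires the full smoothness bound on both copies of the inner product rather than a convex combination with the plain convexity bound.
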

\begin{equation*}
   \< \theta^k- w^{\star}, \nabla f(\theta^k) \>  \leq \| \theta^k - w^{\star} \|^2_{\mGamma^{-1}} - \E{\|\theta^{k+1} - w^{\star}\|^2_{\mGamma^{-1}} \big| \theta^k} + \| \nabla f(\theta^k) \|_{\mM^{-1}-\mP^{-1}\mGamma}^2 + \E{\|\mC \sigma_S \one \|_{\mGamma}^2}.
\end{equation*}

\begin{proof}
    \begin{align*}
\E{\|\theta^{k+1} - w^{\star}\|^2_{\mGamma^{-1}} \big| \theta^k} 
&= \E{\|\theta^k - \mathbf{\Gamma}\mC\(\nabla f(\theta^k) + \eta\) - w^{\star} \|^2_{\mGamma^{-1}} \big|\theta^k} \\
&= \| \theta^k - w^{\star} \|^2_{\mGamma^{-1}}  - 2\< \theta^k-w^{\star}, \nabla f(\theta^k) \>  + \E{\|\mathbf{\Gamma}\mC\(\nabla f(\theta^k) + \eta\)\|^2_{\mGamma^{-1}}} \\ 
&= \| \theta^k - w^{\star} \|^2_{\mGamma^{-1}}  - 2\< \theta^k - w^{\star}, \nabla f(\theta^k) \> + \| \nabla f(\theta^k) \|_{\mP^{-1}\mGamma}^2 + \E{\|\mC \sigma_S \one \|_{\mGamma}^2}\\
&\leq \| \theta^k - w^{\star} \|^2_{\mGamma^{-1}}  - \< \theta^k - w^{\star}, \nabla f(\theta^k) \>  - \| \nabla f(\theta^k) \|^2_{\mM^{-1}}\\
&\quad + \| \nabla f(\theta^k) \|_{\mP^{-1}\mGamma}^2 + \E{\|\mC \sigma_S \one \|_{\mGamma}^2}.
\end{align*}
It remains to rearrange the terms.
\end{proof}

\begin{lemma}
	Under assumptions  $f$ is convex and $\mM$-smooth and the selection of stepsize $\mGamma = \mP\mM^{-1}$ and $0<\alpha < 1$ the iterates of \Cref{algorithm} satisfy
	\begin{align*}
		\E{f(\theta^{k+1}) - f(w^{\star}) \big| \theta^k} - & (1-\alpha)\(f(\theta^k) - f(w^{\star})\)\\
         &\leq  \alpha\(\| \theta^k - w^{\star} \|^2_{\mGamma^{-1}} -  \E{\|\theta^{k+1} - w^{\star}\|^2_{\mGamma^{-1}} \big| \theta^k} \)  + \frac{2\alpha + 1}{2} \E{\|\mC \sigma_S \one \|_{\mP \mM^{-1}}^2}.
	\end{align*}

\end{lemma}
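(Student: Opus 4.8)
The plan is to combine the two preceding lemmas — the Descent Lemma and \Cref{lemma_l2_distance} — and then exploit the step-size choice $\mGamma = \mP\mM^{-1}$ to collapse every gradient-dependent term. First I would apply the Descent Lemma with $w = w^{\star}$, and bound its leading term $\alpha\(f(\theta^k) - f(w^{\star})\)$ from above, using convexity of $f$, by $\alpha\<\nabla f(\theta^k), \theta^k - w^{\star}\>$. Substituting the estimate for this inner product supplied by \Cref{lemma_l2_distance} introduces the telescoping distance pair $\alpha\(\|\theta^k - w^{\star}\|^2_{\mGamma^{-1}} - \E{\|\theta^{k+1} - w^{\star}\|^2_{\mGamma^{-1}} \big|\theta^k}\)$ — which is already exactly the form appearing in the claim — together with two leftover gradient-norm terms, $\alpha\|\nabla f(\theta^k)\|^2_{\mM^{-1}-\mP^{-1}\mGamma}$ and $-\|\nabla f(\theta^k)\|^2_{\mGamma} + \frac12\|\nabla f(\theta^k)\|^2_{\mP^{-1}\mGamma^2\mM}$, plus the two noise terms $\alpha\E{\|\mC\sigma_S\one\|^2_{\mGamma}}$ and $\frac12\E{\|\mC\sigma_S\one\|^2_{\mGamma^2\mM}}$.

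The crux is the algebra forced by $\mGamma = \mP\mM^{-1}$. I would first observe that $\mM^{-1} - \mP^{-1}\mGamma = \mM^{-1} - \mM^{-1} = 0$, so the term $\alpha\|\nabla f(\theta^k)\|^2_{\mM^{-1}-\mP^{-1}\mGamma}$ vanishes identically. Next I would check that $\mP^{-1}\mGamma^2\mM = \mP^{-1}\mP^2\mM^{-2}\mM = \mGamma$, so that the two remaining gradient-norm contributions merge into $-\frac12\|\nabla f(\theta^k)\|^2_{\mGamma} \le 0$ and may simply be discarded. This is the step I expect to be the main obstacle, since it requires keeping exact track of four distinct diagonal weight matrices and confirming that they cancel precisely rather than merely approximately; a single mismatch in the exponents of $\mP$ or $\mM$ would leave a residual gradient term that does not have a definite sign.

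Finally I would treat the noise. The two surviving noise contributions carry weights $\mGamma = \mP\mM^{-1}$ (with coefficient $\alpha$) and $\mGamma^2\mM = \mP^2\mM^{-1}$ (with coefficient $\frac12$). Because each $p_j = \Pr(j \in S) \in (0,1]$ is a probability we have $\mP^2 \preceq \mP$, hence $\mP^2\mM^{-1} \preceq \mP\mM^{-1}$ and therefore $\E{\|\mC\sigma_S\one\|^2_{\mGamma^2\mM}} \le \E{\|\mC\sigma_S\one\|^2_{\mP\mM^{-1}}}$. Adding the two noise terms then produces exactly the coefficient $\frac{2\alpha+1}{2}$ in front of $\E{\|\mC\sigma_S\one\|^2_{\mP\mM^{-1}}}$, which matches the claimed inequality and completes the argument.
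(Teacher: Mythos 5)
Your proposal is correct and follows essentially the same route as the paper's own proof: apply the Descent Lemma at $w = w^{\star}$, bound $\alpha\(f(\theta^k) - f(w^{\star})\)$ via convexity and \Cref{lemma_l2_distance}, use $\mGamma = \mP\mM^{-1}$ to make $\mM^{-1} - \mP^{-1}\mGamma$ vanish and to merge the gradient terms into $-\frac{1}{2}\norms{\nabla f(\theta^k)}_{\mGamma} \le 0$, and finally bound the noise term with weight $\mGamma^2\mM = \mP^2\mM^{-1}$ by the one with weight $\mP\mM^{-1}$ using $\mP^2 \preceq \mP$. Your accounting of the matrix algebra and of the coefficient $\frac{2\alpha+1}{2}$ matches the paper exactly.
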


\begin{proof}
Using convexity of $f$ and lemma \ref{lemma_l2_distance}, we get
\begin{align*}
f(\theta^k) - f(w^{\star}) &\leq \left\langle \nabla f(\theta^k), \theta^k - w^{\star} \right\rangle \\
&= \| \theta^k - w^{\star} \|^2_{\mGamma^{-1}} -  \E{\|\theta^{k+1} - w^{\star}\|^2_{\mGamma^{-1}} \big| \theta^k} + \| \nabla f(\theta^k) \|_{\mM^{-1} - \mP^{-1}\mGamma}^2 + \E{\|\mC \sigma_S \one \|_{\mGamma}^2}.
\end{align*}

Next, we have
\begin{align*}
    &\E{f(\theta^{k+1}) - f(w^{\star}) \big| \theta^k} - (1-\alpha)\(f(\theta^k) - f(w^{\star})\)\\
    &\quad\le \alpha\(f(\theta^k) - f(w^{\star})\) - \|\nabla f(\theta^k)\|^2_{\mathbf{\Gamma}} + \frac{1}{2}\( \| \nabla f(\theta^k) \|_{\mP^{-1}\mGamma^2\mM}^2 + \E{\|\mC \sigma_S \one \|_{\mGamma^2\mM}^2} \)\\
    &\quad\le \alpha\(\| \theta^k - w^{\star} \|^2_{\mGamma^{-1}} -  \E{\|\theta^{k+1} - w^{\star} \|^2_{\mGamma^{-1}} \big| \theta^k} + \| \nabla f(\theta^k) \|_{\mM^{-1} - \mP^{-1}\mGamma}^2 + \E{\|\mC \sigma_S \one \|_{\mGamma}^2}\) \\
        &\qquad  - \|\nabla f(\theta^k)\|^2_{\mathbf{\Gamma}} + \frac{1}{2}\( \| \nabla f(\theta^k) \|_{\mP^{-1}\mGamma^2\mM}^2 + \E{\|\mC \sigma_S \one \|_{\mGamma^2\mM}^2} \)\\
    &\quad \overset{\mGamma = \mP\mM^{-1}}{=} \alpha\(\| \theta^k - w^{\star} \|^2_{\mGamma^{-1}} -  \E{\|\theta^{k+1} - w^{\star} \|^2_{\mGamma^{-1}} \big| \theta^k} + \| \nabla f(\theta^k) \|_{\mM^{-1}-\mP^{-1}\mGamma}^2 + \E{\|\mC \sigma_S \one \|_{\mGamma}^2}\) \\
        &\qquad  - \|\nabla f(\theta^k)\|^2_{\mathbf{\mP\mM^{-1}}} + \frac{1}{2}\( \| \nabla f(\theta^k) \|_{\mP\mM^{-1}}^2 + \E{\|\mC \sigma_S \one \|_{\mGamma^2\mM}^2} \)\\
    &\overset{\mGamma = \mP\mM^{-1}}{\leq}  \alpha\(\| \theta^k - w^{\star} \|^2_{\mGamma^{-1}} -  \E{\|\theta^{k+1} - w^{\star} \|^2_{\mGamma^{-1}} \big| \theta^k} \)  + \alpha \E{\|\mC \sigma_S \one \|_{\mP \mM^{-1}}^2} + \frac{1}{2}\E{\|\mC \sigma_S \one \|_{\mP^2 \mM^{-1}}^2} \\
    &\overset{}{\leq}  \alpha\(\| \theta^k - w^{\star} \|^2_{\mGamma^{-1}} -  \E{\|\theta^{k+1} - w^{\star} \|^2_{\mGamma^{-1}} \big| \theta^k} \)  + \frac{2\alpha + 1}{2} \E{\|\mC \sigma_S \one \|_{\mP \mM^{-1}}^2}.
\end{align*}
\end{proof}

\begin{restate-lemma}{\ref{lemma_convergence}}
    Under assumptions  $f$ is convex and $\mM$-smooth and the selection of stepsize $\mGamma = \mP\mM^{-1}$, the iterates of \Cref{algorithm} satisfy
    \begin{equation*}
    	 \E{f(w^{t+1}) - f(w^{\star})}  \leq  \frac{\E{\|w^t - w^{\star} \|^2_{\mM\mP^{-1}}} + 2\E{f(w^t) - f(w^{\star})}}{2K} + \E{\|\mC \sigma_S \one \|_{\mP \mM^{-1}}^2}.
    \end{equation*}
\end{restate-lemma}

\begin{proof} Taking $\alpha =\nicefrac{1}{2}$, the iterates of \Cref{algorithm} satisfy

    \begin{align*}
        & \sum_{k=0}^{K-1}\[\E{f(\theta^{k+1}) - f(w^{\star})} - \frac{1}{2}\E{f(\theta^k) - f(w^{\star})} \]\\
        &\quad\le \sum_{k=0}^{K-1} \[ \frac{1}{2}\(\E{\| \theta^k - w^{\star} \|^2_{\mGamma^{-1}}} -  \E{\|\theta^{k+1} - w^{\star}\|^2_{\mGamma^{-1}}} \) + \E{\|\mC \sigma_S \one \|_{\mP \mM^{-1}}^2} \] \\
        &\quad\le  \frac{1}{2}\(\E{\|\theta^0 - w^{\star} \|^2_{\mGamma^{-1}}} -  \E{\|\theta^K - w^{\star}\|^2_{\mGamma^{-1}}} \) + K\E{\|\mC \sigma_S \one \|_{\mP \mM^{-1}}^2}\\
        &\quad \le \frac{1}{2}\E{\|w^t - w^{\star} \|^2_{\mM\mP^{-1}}} + K\E{\|\mC \sigma_S \one \|_{\mP \mM^{-1}}^2}.
    \end{align*}
    
    On the other hand
    \begin{align*}
        & \sum_{k=0}^{K-1}\[\E{f(\theta^{k+1}) - f(w^{\star})} -\frac{1}{2}\E{f(\theta^k) - f(w^{\star})} \]\\
        & = \E{f(\theta^{K}) - f(w^{\star})} - \E{f(\theta^0) - f(w^{\star})} + \frac{1}{2}\sum_{k=0}^{K-1}\E{f(\theta^k) - f(w^{\star})} \\
        &= \E{f(\theta^{K}) - f(w^{\star})} - \E{f(\theta^0) - f(w^{\star})}\\
            &\quad +\frac{1}{2}\sum_{k=1}^{K}\E{f(\theta^k) - f(w^{\star})} + \frac{1}{2}\E{f(\theta^0) - f(w^{\star})} - \frac{1}{2}\E{f(\theta^K) - f(w^{\star})} \\
        &= \frac{1}{2}\sum_{k=1}^{K}\E{f(\theta^k) - f(w^{\star})} - \frac{1}{2}\E{f(w^t) - f(w^{\star})} + \frac{1}{2}\E{f(\theta^K) - f(w^{\star})} \\
        &\ge \frac{1}{2}\sum_{k=1}^{K}\E{f(\theta^k) - f(w^{\star})} - \E{f(w^t) - f(w^{\star})}.
    \end{align*}
    
    Further, using the convexity of $f$ we have 
    $$
    f(w^{t+1}) \le \frac{1}{K} \sum_{k=1}^{K} f(\theta^k).
    $$
    Using this, we get 
    
    \begin{align*}
        \E{f(w^{t+1}) - f(w^{\star})} &\le  \frac{1}{K} \sum_{k=1}^{K} f(\theta^k) - f(w^{\star}) \\
        & \le \frac{1}{K}\[\frac{1}{2}\E{\|w^t - w^{\star} \|^2_{\mM\mP^{-1}}} + \E{f(w^t) - f(w^{\star})} + K\E{\|\mC \sigma_S \one \|_{\mP \mM^{-1}}^2}\]\\
        & \le \frac{\E{\|w^t - w^{\star} \|^2_{\mM\mP^{-1}}} + 2\E{f(w^t) - f(w^{\star})}}{2K} + \E{\|\mC \sigma_S \one \|_{\mP \mM^{-1}}^2}.
    \end{align*}
    
  \end{proof}

\vfill

\end{document}